\documentclass[10pt]{amsart}

\usepackage{tikz}
\usepackage{amssymb,amsmath,latexsym,graphicx}
\usepackage{charter,eucal}
\usepackage{amssymb}
\usepackage[all,cmtip]{xy}
\usepackage{rotating,multicol}
\usetikzlibrary{decorations.markings}

\newtheorem{theorem}{Theorem }[section]

\newtheorem{lemma}[theorem]{Lemma}
\newtheorem{observation}[theorem]{Observation}

\newtheorem{remark}[theorem]{Remark}
\newtheorem{corollary}[theorem]{Corollary}
\newtheorem{proposition}[theorem]{Proposition}
\newtheorem{principle}[theorem]{\textsc{Principle}}

\newcommand{\bt}{\begin{theorem}}
\newcommand{\et}{\end{theorem}}
\newcommand{\bmt}{\begin{maintheorem}}
\newcommand{\emt}{\end{maintheorem}}
\newcommand{\bc}{\begin{corollary}}
\newcommand{\bl}{\begin{lemma}}
\newcommand{\ec}{\end{corollary}}
\newcommand{\el}{\end{lemma}}
\newcommand{\bo}{\begin{observation}}
\newcommand{\eo}{\end{observation}}
\newcommand{\bp}{\begin{proposition}}
\newcommand{\ep}{\end{proposition}}
\newcommand{\br}{\begin{remark}}
\newcommand{\er}{\end{remark}}
\newcommand{\bpr}{\begin{principle}}
\newcommand{\epr}{\end{principle}}

\def\PG{\mathbf{PG}}

\def\AG{\mathbf{AG}}

\def\eop{\hspace*{\fill}$\blacksquare$}

\usepackage{amssymb} %maths
\usepackage{amsmath} %maths
\usepackage[utf8]{inputenc} %useful to type directly diacritic characters
\usepackage{latexsym}

\title {\bf Generalized affine spaces, generalized ovoids and generalized quadrangles}
\author {J. A. Thas \\Ghent University}
\address{Ghent University, Department of Mathematics, Computer Science and Statistics \\Krijgslaan 299, S9, B-9000 Ghent, Belgium}
\email{thas.joseph@gmail.com}
\begin {document}
\maketitle
\begin{abstract}
We consider a partition of the projective space $\PG(m + n - 1, \bf K)$, with {\bf K} any (commutative)  field, into one space of dimension $m - 1$ while all other spaces of the partition have dimension $n - 1$. Characterizations of particular partitions of this type are given. This research is motivated by some interesting problems on translation generalized quadrangles \cite {JT:23}. In the paper we apply our results to generalized ovoids and generalized quadrangles. Finally we give some ideas for further research.

\bigskip
Keywords: projective space, affine space, spread, generalized ovoid, generalized quadrangle, translation

\bigskip
MSC: 05B25, 51A30, 51A45, 51A15, 51A40, 51E12, 51E20, 51E21, 51E22, 51E23
\end {abstract}

\section {introduction}
In a previous paper \cite {JT:23} partitions of a projective space  by subspaces come in. It concerned a particular partition of the projective space $\PG(3n - 1, q)$ over the finite field GF$(q)$ by one subspace of dimension $2n - 1$ and $q^{2n}$ subspaces of dimension $n - 1$. These partitions played a crucial role in the paper.
\par Here we will consider partitions of $\PG(m + n - 1, \bf K)$, with {\bf K} any (commutative) field, by one subspace of dimension $m - 1$ and subspaces of dimension $n - 1$. The projective space $\PG(m + n - 1, \bf K)$ together with the elements of the partition will be called  a {\em space with an affine spread}.
\par In Section 2 we introduce the {\em  regulus condition}, {\em condition (R)} for short, and derive properties of spaces with an affine spread satisfying this condition. In this case the set $S$ of the $(n - 1)$-dimensional spaces will be called a {\em generalized affine space}. That property appears to play a key role in the other sections of the paper.
\par In Section 3 we introduce {\em conditions (T1)} and {\em (T2)} and show equivalencies between them.
\par Section 4 is on projections and Section 5 contains the main results where we determine the structure of generalized affine spaces satisfying condition (Ti), $i \in \lbrace 1,2 \rbrace$. For {\bf K} = GF($q$) they are unique up to isomorphism. Here it becomes clear why the term generalized affine space is used.
\par Sections 6 and 7 are on applications to generalized ovoids and generalized quadrangles.
\par In the last Section 8 we give some ideas for further research.

\section {The regulus condition}
Let {\bf K} be a (commutative) field and let $\PG(m - 1, \bf {K})$ be an $(m - 1)$-dimensional subspace of the $(m + n - 1)$-dimensional projective space $\PG(m + n - 1, \bf {K} )$, with $m \geq n, n \geq 2 $. For short, let $\PG( m - 1, \bf {K}) = \xi $ and $\PG(m + n - 1, \bf {K}) = \pi $.
\par Next, let $S = \lbrace \alpha_1, \alpha_2, \ldots \rbrace$ be a partition of $\pi \setminus \xi$ by $(n - 1)$-dimensional subspaces of $\pi$. If {\bf K} is the finite field GF($q$), then $\vert S \vert = q^m$.
\par A space $\PG(m + n - 1, \bf{K})$ together with the partition $\lbrace \xi, \alpha_1, \alpha_2, \ldots \rbrace$ will be called a {\em space with an affine spread}.
\par Consider $\alpha_i, \alpha_j \in S$, $i \not= j$, and let $\langle \alpha_i, \alpha_j \rangle \cap \xi = \pi_{ij}$ (here $\langle \alpha_i, \alpha_j \rangle $ is the subspace generated by $\alpha_i$ and $\alpha_j$). Then $\pi_{ij}$ is $(n - 1)$-dimensional. Let $\mathcal {S}_{ij}$ be the Segre variety $\mathcal {S}_{1;n - 1} \subset \langle \alpha_i, \alpha_j \rangle$ determined by $\alpha_i, \alpha_j, \pi_{ij}$ \cite {H&JT:16}.
\bigskip

{\bf Definition}
The set $S$ satisfies the {\em regulus condition} if for any $i, j$, with $ i \not= j$, the regulus \cite {H&JT:16} $\lbrace \pi_{ij}, \alpha_i,  \alpha_j, \ldots \rbrace$ on $\mathcal S_{ij} $, minus $\pi_{ij}$, is a subset $S_{ij}$ of $S$. Notice that this regulus is a set of maximal spaces of the Segre variety $\mathcal S_{ij}$. For short, the regulus condition will also be called {\em condition (R)}. If $S$ satisfies condition (R) it is called a {\em generalized affine space}.

\bigskip
\par If {\bf K} = GF$(q)$, then $\vert S_{ij} \vert = q$. Notice that for $q = 2$  condition (R) is always satisfied. %From now on we assume that $\vert {\bf K} \vert \not= 2$, unless it is stated explicitly that $\vert {\bf K} \vert \geq 2$.
\par Let $\eta_0, \eta_1, \ldots$ be the $m$-spaces in $\pi$ containing $\xi$. Then $\eta_k \cap \mathcal {S}_{ij}$ consists of $\pi_{ij}$ and a line $L^k_{ij}$; this line $L^k_{ij}$ is a maximal line of the Segre variety $\mathcal {S}_{ij}$.

\begin{lemma}
\label{lem2.1}
For fixed $k$ the set of all $L^k_{ij} \setminus \xi$ is the set of all lines of the affine space $\eta_k \setminus \xi$ = $\AG_k(m, \bf K)$. 
\end{lemma}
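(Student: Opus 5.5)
We have to show two inclusions: each $L^k_{ij}\setminus\xi$ is a line of $\AG_k(m,\mathbf K)$, and every line of $\eta_k\setminus\xi$ arises as some $L^k_{ij}\setminus\xi$. Note that the lines of $\AG_k(m,\mathbf K)$ are exactly the lines of $\eta_k$ not contained in $\xi$, with their point in $\xi$ removed. Throughout we use that each $\alpha_i$ meets $\eta_k$ in exactly one point. Indeed, $\alpha_i\cap\xi=\emptyset$ and $\dim\alpha_i+\dim\eta_k=(n-1)+m=\dim\pi$, so $\alpha_i\cap\eta_k$ is nonempty; and since $\langle\xi,\alpha_i\rangle=\pi$, a line inside $\alpha_i\cap\eta_k$ would force $\dim\langle \xi,\alpha_i\rangle\le m$, which is impossible. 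Hence the $q^m$ (in general, all) elements of $S$ induce a partition of $\eta_k\setminus\xi$ into points, one point $p_i=\alpha_i\cap\eta_k$ for each $\alpha_i\in S$. In particular every point of the affine space lies in a unique $\alpha_i$.

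For the first inclusion I take the statement preceding the lemma as given: $\eta_k\cap\mathcal S_{ij}=\pi_{ij}\cup L^k_{ij}$ with $L^k_{ij}$ a maximal line of the Segre variety. Such a line meets every maximal space of the regulus in exactly one point, in particular $\pi_{ij}\subset\xi$. So $L^k_{ij}$ is a line of $\eta_k$ meeting $\xi$ in exactly one point, and $L^k_{ij}\setminus\xi$ is an affine line of $\AG_k(m,\mathbf K)$. We also record that $L^k_{ij}$ contains $p_i$ and $p_j$, since it meets $\alpha_i$ and $\alpha_j$ inside $\eta_k$. Thus $L^k_{ij}$ is precisely the line $p_ip_j$.

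For the converse, let $L$ be a line of $\eta_k$ not contained in $\xi$, and choose two distinct affine points $x,y$ on it. By the partition property above, $x\in\alpha_i$ and $y\in\alpha_j$ for unique indices, with $i\neq j$ because each $\alpha$ meets $\eta_k$ in only one point. Then $x=p_i$ and $y=p_j$, so by the previous paragraph $L^k_{ij}=p_ip_j=L$. This gives the converse inclusion.

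The only delicate step is justifying the cited description of $\eta_k\cap\mathcal S_{ij}$. If needed, I would verify it directly as follows. $\langle\alpha_i,\alpha_j\rangle$ has dimension $2n-1$, since $\alpha_i\cap\alpha_j=\emptyset$. Its intersection with $\eta_k$ has dimension at least $2n-1+m-(m+n-1)=n$, and it contains $\pi_{ij}$ (dimension $n-1$), so it is exactly $\langle\pi_{ij},p_i\rangle$, of dimension $n$. Inside the Segre variety, the maximal line through $p_i$ meets $\alpha_j$ and $\pi_{ij}$. That line lies in $\langle\pi_{ij},p_i\rangle\subset\eta_k$, so it is the line $p_ip_j$. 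The intersection of $\mathcal S_{ij}$ with this $n$-space consists of $\pi_{ij}$ together with that line: any further point would lie on another maximal line, which would meet $\alpha_i$ in a second point of $\eta_k$, a contradiction. Notably, condition (R) is not needed for this lemma.
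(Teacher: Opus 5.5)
Your proof is correct and is essentially the paper's argument: for distinct affine points $x\in\alpha_i$, $y\in\alpha_j$ the line $L^k_{ij}$ passes through both. You additionally make explicit what the paper leaves implicit, namely that $\alpha_i\cap\eta_k$ is a single point $p_i$, that $L^k_{ij}=p_ip_j$, and the description of $\eta_k\cap\mathcal S_{ij}$.

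One small slip: a line in $\alpha_i\cap\eta_k$ forces $\dim\langle\xi,\alpha_i\rangle\le m+n-2$, not $\le m$. More directly, such a line would meet the hyperplane $\xi$ of $\eta_k$, contradicting $\alpha_i\cap\xi=\emptyset$. Either way the contradiction you need still holds.
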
 
{\em Proof} 
Let $x$ and $y$ be distinct points of $\AG_k(m, \bf K)$. Let $\alpha_i$ and $\alpha_j$ be the elements of $S$ containing respectively $x$ and $y$. Then the line $L^k_{ij}$ contains $x$ and $y$. \eop \\

\par Now we define the following bijection $\rho_{kl}$ from $\AG_k(m, \bf K)$ onto $\AG_l(m, \bf K)$, $k \not= l$: 
\par $\rho_{kl}: \AG_k(m, {\bf K}) \rightarrow \AG_l(m, {\bf K}), x^t_k \mapsto x^t_l$,
\par with $\alpha_t \cap \eta_k = \lbrace x^t_k \rbrace$, $\alpha_t \cap \eta_l = \lbrace x^t_l \rbrace$, and $\alpha_t \in S$.

\begin{theorem}
\label{thm2.2}
For $\vert {\bf K} \vert > 2$ the bijection $\rho_{kl}$ is a projectivity from $\AG_k(m, \bf K)$ onto $\AG_l(m, \bf K)$.
\end {theorem}
{\em Proof}
Let $M$ be any line of $\AG_k(m, \bf K)$. Then $M$ is some set $L^k_{ij} \ \setminus \xi$. The line $L^k_{ij}$ belongs to the Segre variety $\mathcal S_{ij}$. Let $L^l_{ij} \cup \pi_{ij}$ be the intersection of $\mathcal S_{ij}$ with the space $\eta_l$. Then $\rho_{kl}$ maps $L^k_{ij} \setminus \xi = M$ onto $L^l_{ij} \setminus \xi = M^\prime$.
\par Hence it is clear that $\rho_{kl}$ is an isomorphism from $\AG_k(m, \bf K)$ onto $\AG_l(m, \bf K)$.
\par By a property of Segre varieties the restriction of $\rho_{kl}$ to $M$ preserves cross-ratios \cite {H&JT:16}.
\par Consequently $\rho_{kl}$ is a projectivity. \eop \\

\begin{corollary}
\label{cor2.3}
For $\vert {\bf K} \vert > 2$ the bijection $\rho_{kl}$ preserves parallelism, that is, parallel lines of $\AG_k(m, \bf K)$ are mapped onto parallel lines  of $\AG_l(m, \bf K)$.
\end{corollary}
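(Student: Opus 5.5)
The plan is to deduce the corollary directly from Theorem \ref{thm2.2}, using the fact that parallelism in an affine space is determined by incidence and coplanarity. Two distinct lines of $\AG_k(m,{\bf K})$ are parallel exactly when they are disjoint and lie in a common affine plane. Theorem \ref{thm2.2} says that $\rho_{kl}$ is a projectivity from $\AG_k(m,{\bf K})$ onto $\AG_l(m,{\bf K})$, and in particular it is a collineation: a bijection on points mapping lines onto lines. So it suffices to show that a collineation between affine spaces maps disjoint coplanar line pairs to disjoint coplanar line pairs.

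First, disjointness is preserved simply because $\rho_{kl}$ is a bijection on points. Second, for coplanarity, let $M, N$ be distinct parallel lines of $\AG_k(m,{\bf K})$ spanning the affine plane $P$. The plane $P$ is the union of all lines meeting both $M$ and $N$ in distinct points, together with the lines parallel to these. More simply, $P$ is the smallest subspace containing $M\cup N$, and a subspace is a point set closed under joining two points by their line. A collineation preserves this closure property, so it maps $P$ onto the subspace generated by $\rho_{kl}(M)\cup\rho_{kl}(N)$. Dimension is preserved as well, since a collineation maps a chain of subspaces generated by independent points to such a chain. Hence $\rho_{kl}(M)$ and $\rho_{kl}(N)$ are two disjoint lines in an affine plane, and are therefore parallel.

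An even shorter route avoids the combinatorial check. A projectivity of $\AG_k(m,{\bf K})$ onto $\AG_l(m,{\bf K})$ extends to a projectivity of the projective closures $\eta_k\to\eta_l$ that maps the hyperplane at infinity $\xi$ of $\eta_k$ onto that of $\eta_l$, which is again $\xi$. Two lines are parallel exactly when their closures meet $\xi$ in the same point, and the extension respects this. This is essentially how one understands ``projectivity'' between affine spaces in Theorem \ref{thm2.2}, and I would present it that way.

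The only delicate point is justifying that the map of Theorem \ref{thm2.2} genuinely extends to the points at infinity, or equivalently that it preserves planes. I would handle this with the collineation argument of the second paragraph, which needs nothing beyond the fact that lines go to lines. For $m\ge 2$ this suffices, and that holds because $m\ge n\ge 2$.
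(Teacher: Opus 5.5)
Your proposal is correct and is essentially the paper's argument. The paper states this corollary without proof, as an immediate consequence of Theorem \ref{thm2.2}: $\rho_{kl}$ is an isomorphism, indeed a projectivity, of affine spaces. Your collineation argument spells out the standard fact the paper leaves implicit: disjointness is kept by bijectivity, and coplanarity is kept because the plane generated by $M$ and a point $x \in N$ goes to the plane generated by $M^{\rho_{kl}}$ and $x^{\rho_{kl}}$. Note only that characterizing subspaces as line-closed point sets uses $\vert {\bf K} \vert > 2$, which is part of the hypothesis.
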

\bigskip

\par Let $M_1, M_2, \ldots$ be parallel lines of $\AG_k(m, \bf K)$. The Segre varieties $\mathcal{S}^1_{ij}, \mathcal{S}^2_{{i^\prime}{j^\prime}}, \ldots$ containing the respective lines $M_1, M_2, \ldots$ intersect $\xi$ in spaces $\pi^1_{ij}, \pi^2_{{i^\prime}{j^\prime}}, \ldots$. The lines $M_1, M_2, \ldots$ have a common "point at infinity" $z \in \xi$. Clearly $z$ is a common point of $\pi^1_{ij}, \pi^2_{{i^\prime}{j^\prime}}, \ldots$.

\begin{theorem}
\label{thm2.4}
We have $\pi^1_{ij} = \pi^2_{{i^\prime}{j^\prime}} = \ldots = \bar{\pi}$ and the parallel lines $M_1^\prime, M_2^\prime, \ldots$ of $\AG_l(m, \bf K)$, with $M_i^\prime = M_i^{\rho_{kl}}$ define the same $(n - 1)$-dimensional space $\bar{\pi}$ of $\xi$.
\end{theorem}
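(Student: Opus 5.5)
The plan is to describe $\pi_{ij}$ intrinsically as the set of points at infinity of the images of the line $M=L^k_{ij}\setminus\xi$ under all the maps $\rho_{kl}$. Corollary~\ref{cor2.3} then forces parallel lines to yield the same space. Throughout I assume $\vert {\bf K}\vert>2$, as in Theorem~\ref{thm2.2} and Corollary~\ref{cor2.3}, since parallelism preservation is what drives the argument.

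\emph{Step 1: describe $\pi_{ij}$ through the maximal lines of $\mathcal S_{ij}$.} The Segre variety $\mathcal S_{ij}=\mathcal S_{1;n-1}$ carries two families of maximal subspaces:
\begin{itemize}
\item the regulus $\lbrace \pi_{ij},\alpha_i,\alpha_j,\ldots\rbrace$ of $(n-1)$-spaces;
\item a family of lines, one through each point of $\pi_{ij}$, each meeting every member of the regulus in exactly one point.
\end{itemize}
Take a point $u\in\pi_{ij}$ and let $N_u$ be the maximal line through $u$. Then $N_u$ meets $\alpha_i$ in a point $x\notin\xi$, so $\langle \xi, x\rangle=\langle\xi,N_u\rangle$ is one of the $m$-spaces $\eta_l$. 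Since $\eta_l\cap\mathcal S_{ij}=\pi_{ij}\cup L^l_{ij}$ and $N_u\not\subset\pi_{ij}$, we get $N_u=L^l_{ij}$. Consequently
\[
\pi_{ij}=\lbrace\, L^l_{ij}\cap\xi \;:\; \eta_l\supset\xi \,\rbrace ,
\]
where each $L^l_{ij}\cap \xi$ is a point of $\pi_{ij}$ because $L^l_{ij}\subset\mathcal S_{ij}$ meets $\pi_{ij}$.

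\emph{Step 2: identify $L^l_{ij}$ with the image of $M$.} As in the proof of Theorem~\ref{thm2.2}, $\rho_{kl}$ maps $L^k_{ij}\setminus\xi$ onto $L^l_{ij}\setminus\xi$. This holds because each point of $L^k_{ij}\setminus\xi$ lies in some $\alpha_t\in S_{ij}$ (by condition (R)), and $\alpha_t\cap\eta_l$ lies on $\mathcal S_{ij}\cap\eta_l$ outside $\xi$, hence on $L^l_{ij}$. Together with Step 1 this gives
\[
\pi_{ij}=\lbrace\, \text{point at infinity of } M^{\rho_{kl}} \;:\; \text{all } l \,\rbrace ,
\]
with $\rho_{kk}$ the identity. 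This description depends only on $M$, not on the chosen pair $i,j$.

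\emph{Step 3: compare parallel lines.} Now let $M_1,M_2,\ldots$ be parallel in $\AG_k(m,{\bf K})$. For each $l$, Corollary~\ref{cor2.3} says that $M_1^{\rho_{kl}},M_2^{\rho_{kl}},\ldots$ are parallel in $\AG_l(m,{\bf K})$, so they share a point at infinity. By Step 2, the sets $\pi^1_{ij},\pi^2_{i'j'},\ldots$ are therefore equal; call the common space $\bar\pi$.

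For the second assertion, note that $M_s'=M_s^{\rho_{kl}}$ equals $L^l_{ij}\setminus\xi$ for the same Segre variety $\mathcal S^s$ that contains $M_s$. Hence $M_s'$ defines the same space $\pi^s=\bar\pi$.

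The main obstacle is Step 1, namely checking that every point of $\pi_{ij}$ really occurs as the point at infinity of some $L^l_{ij}$. This rests on the standard structure of $\mathcal S_{1;n-1}$ (every point lies on a maximal line meeting all regulus members) together with the observation that $\langle\xi,N_u\rangle$ is an $m$-space; both follow from \cite{H&JT:16}.
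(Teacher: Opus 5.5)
Your proposal is correct and follows essentially the same route as the paper. Both use Corollary~\ref{cor2.3} to see that the images $M_s^{\rho_{kl}}$ stay parallel, so for each $l$ they share a point at infinity lying in every $\pi^s$, and then ``vary $l$'' to get equality. Your Step~1 makes explicit what the paper leaves implicit in that last step: via the transversal lines of $\mathcal S_{1;n-1}$, every point of $\pi_{ij}$ is $L^l_{ij}\cap\xi$ for some $l$, so varying $l$ does sweep out all of $\pi_{ij}$.
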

{\em Proof}
The parallel lines $M_1, M_2, \ldots$ of $\AG_k(m,\bf K)$ have a common point at infinity $z \in \xi$ which also belongs to $\pi^1_{ij},\pi^2_{{i^\prime}{j^\prime}}, \ldots$. The parallel lines $M_1^\prime = M_1^{\rho_{kl}}, M_2^\prime = M_2^{\rho_{kl}}, \ldots$ of $\AG_l(m, \bf K)$ contain a common point at infinity $z^\prime \in \xi$ which also belongs to $\pi^1_{ij}, \pi^2_{{i^\prime}{j^\prime}}, \ldots$.
\par Varying $l$ it follows that $\pi^1_{ij} = \pi^2_{{i^\prime}{j^\prime}} = \ldots = \bar{\pi}$. \eop \\

\begin{corollary}
\label{cor2.5}
All lines of a parallel class $\Gamma$ of lines in $\AG_k(m, \bf K)$, $\vert \bf K \vert > 2$, determine just one $(n - 1)$-dimensional space $\bar{\pi}$ in $\xi$. This space $\bar{\pi}$ corresponds also to the class  $\Gamma^{\rho_{kl}}$ of parallel lines in $\AG_l(m, \bf K)$, for all $l \not= k$. Hence the set of all spaces $\bar{\pi}$ determined by $\AG_k(m, \bf K)$ is independent of $k$.
\end{corollary}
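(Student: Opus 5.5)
The plan is to read Corollary \ref{cor2.5} off Theorem \ref{thm2.4}, supplying the one geometric fact that its proof leaves implicit. The step I expect to be the main obstacle is showing that the common point at infinity $z$ lies in $\pi^1_{ij}$, and that the Segre variety of each $M_s$ meets $\xi$ in a single well-defined $(n-1)$-space. Write $\mathcal S^s$ for the Segre variety through $M_s$ and $\pi^s$ for its intersection with $\xi$.

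First I fix a parallel class $\Gamma=\{M_1,M_2,\ldots\}$ in $\AG_k(m,{\bf K})$, with common point at infinity $z\in\xi$. Each $M_s$ has the form $L^k_{ij}\setminus\xi$ by Lemma \ref{lem2.1}. Its Segre variety $\mathcal S^s$ is uniquely determined, because any two elements of $S$ through two affine points of $M_s$ give the same regulus by condition (R). The closure of $M_s$ is a maximal line of $\mathcal S^s$, so it meets $\pi^s$ in a point, and that point must be $z$. Hence $z\in\pi^s$ for all $s$.

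Next I apply $\rho_{kl}$ for every $l\neq k$. By Corollary \ref{cor2.3}, $\Gamma^{\rho_{kl}}$ is again a parallel class, with some point at infinity $z_l$. The image line $M_s^{\rho_{kl}}$ equals $L^l_{ij}\setminus\xi$ on the same Segre variety $\mathcal S^s$, so $z_l\in\pi^s$ for every $s$. Thus every $\pi^s$ contains the set $Z=\{z\}\cup\{z_l: l\neq k\}$.

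The crux is to show that $Z$ spans an $(n-1)$-space; since each $\pi^s$ is $(n-1)$-dimensional and contains $Z$, this forces $\pi^1=\pi^2=\cdots=\bar\pi$. For this I use $\mathcal S^1$ alone. Its maximal lines are exactly the lines $\mathcal S^1\cap\eta_l$, and the map sending a maximal line to its point on $\pi^1$ is a bijection onto $\pi^1$, since the maximal lines of $\mathcal S_{1;n-1}$ through the points of one regulus element partition the variety. Conversely, every point $w\in\pi^1$ determines the $m$-space $\eta=\langle\xi,M\rangle$, where $M$ is the maximal line through $w$, and $\eta$ is some $\eta_l$. So the points $z_l$, as $l$ ranges over all indices including $k$, exhaust $\pi^1$. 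Hence $Z=\pi^1$, and therefore $\pi^s=\pi^1$ for every $s$. In particular all lines of $\Gamma$ determine the single space $\bar\pi$.

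The second statement then follows directly. The lines of $\Gamma^{\rho_{kl}}$ lie on the same Segre varieties $\mathcal S^s$, so they determine the same $\bar\pi$. For the last statement, every space $\bar\pi$ arising from $\AG_k(m,{\bf K})$ also arises from $\AG_l(m,{\bf K})$, through the class $\Gamma^{\rho_{kl}}$. Exchanging the roles of $k$ and $l$, using that $\rho_{lk}=\rho_{kl}^{-1}$ also maps parallel classes to parallel classes by Corollary \ref{cor2.3}, gives the reverse inclusion. So the set of spaces $\bar\pi$ is independent of $k$. The hypothesis $|{\bf K}|>2$ enters only through Corollary \ref{cor2.3}.
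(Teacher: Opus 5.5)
Your proposal is correct and follows the paper's route: the proof of Theorem \ref{thm2.4} also notes that the point at infinity of $\Gamma$ lies in every $\pi^s$, as does the point at infinity of each image class $\Gamma^{\rho_{kl}}$, and it concludes ``varying $l$'' that all $\pi^s$ coincide. Your argument that these points exhaust $\pi^1$ makes that terse step explicit: each point of $\pi^1$ lies on exactly one maximal line of $\mathcal S^1$, and that line spans some $\eta_l$ together with $\xi$. The independence of $k$, using $\rho_{lk}=\rho_{kl}^{-1}$ and Corollary \ref{cor2.3}, is handled correctly.
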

\bigskip 

\par Let $x \in \AG_k(m, \bf K)$, $\vert \bf K \vert > 2$,  and let $x \in \alpha, \alpha \in S$. Then $x \mapsto \alpha$ defines an $\AG(m, \bf K)$ with point set S and with as lines the sets $S_{ij} \subset S$. This $\AG(m, \bf K)$ is independent of $k$.
\par Let us consider the regulus $\lbrace \pi_{ij}, \alpha_i, \alpha_j, \ldots \rbrace = S_{ij} \cup \lbrace \pi_{ij} \rbrace$. Then by \ref{thm2.4} and \ref{cor2.5} the lines of $\AG(m, \bf K)$ parallel to the line $S_{ij}$ define the same space $\pi_{ij} = \bar \pi$ in $\xi$. Hence parallel lines of $\AG(m, \bf K)$ define the same $(n - 1)$-dimensional space $\bar \pi$ in $\xi$.
\bigskip

\begin{remark}
\label{rem2.6}
\begin{itemize}
\item [(i)] Let $W$ be a subset of $S$. Let $V_k$ be the point set of $\AG_k(m, \bf K)$ consisting of the intersections of $\AG_k(m, \bf K)$ with the elements of $W$ and let $V_l$ be the point set of $\AG_l(m, \bf K)$ consisting of the intersections of $\AG_l(m, \bf K)$ with the elements of $W$.
\par Then $V_k^{\rho_{kl}} = V_l$.
\item [(ii)] It is possible that different parallel classes of lines of $\AG(m, \bf K)$ define the same $(n - 1)$-dimensional space $\bar{\pi}$ in $\xi$. Let $\Pi$ be the set of all these spaces  $\bar{\pi}$.
\item [(iii)] Let $z \in \xi$ and let $L$ be a line of $\AG_k(m, \bf K)$ having $z$ as point at infinity. Then the Segre variety $\mathcal {S}_{ij}$ containing the elements of S intersecting $L$, contains the space $\pi_{ij}$ in $\xi$. Hence $\pi_{ij} \in \Pi$. Consequently $\xi$ is the union of all elements of $\Pi$.
\item [(iv)]Assume that $\Pi$ is an $(n - 1)$-spread of $\xi$ and that ${\bf K}$ = GF($q$), $q > 2$.. Then $\vert \bar \pi \vert$ divides $\vert \xi \vert$, with $\bar \pi \in \Pi$. Hence 
$\frac {q^n - 1} {q - 1}$ divides $\frac {q^m - 1}{q - 1}$
and so $n$ divides $m$.
\end{itemize}
\end{remark}
\bigskip 

{\bf Problem} 
What are the possibilities for the sets $\Pi$ ?
\bigskip

\begin{theorem}
\label{thm2.7}
Let $\PG(m + n - r, \bf K)$ be a subspace of $\PG(m + n - 1, \bf K)$ containing $\xi$, $1 \leq r \leq {n - 1}$. Let $\alpha_i \cap \PG(m + n - r, \bf K) = \beta _i$. Then $\beta_i$ is $(n - r)$-dimensional and $\lbrace \xi, \beta_1, \beta_2, \ldots \rbrace$ is a partition of $\PG(m + n - r, \bf K)$. Also, the set $S_r = \lbrace \beta_1, \beta_2, \ldots \rbrace$ is a generalized affine space.
\end{theorem}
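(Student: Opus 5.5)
Write $\pi' = \PG(m+n-r, \bf K)$. The plan is to check first the dimension and partition statements by dimension counting, and then to verify condition (R) for $S_r$ by intersecting the Segre varieties $\mathcal S_{ij}$ with $\pi'$.

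\emph{Dimensions and partition.} Since $\alpha_i \cap \xi = \emptyset$ and $\dim \alpha_i + \dim \xi = (n-1)+(m-1) = m+n-2$, the space $\langle \alpha_i, \xi\rangle$ is all of $\pi$. Because $\xi \subseteq \pi'$, the modular law gives
\[
\dim(\alpha_i \cap \pi') = \dim \alpha_i + \dim \pi' - \dim\langle \alpha_i, \pi'\rangle = (n-1)+(m+n-r)-(m+n-1) = n-r .
\]
Hence $\beta_i$ is $(n-r)$-dimensional; it is nonempty because $r \le n-1$. Every point of $\pi' \setminus \xi$ lies in exactly one $\alpha_i$, hence in exactly one $\beta_i$. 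Therefore $\{\xi, \beta_1, \beta_2, \ldots\}$ partitions $\pi'$, and it is again a space with an affine spread, with parameters $m$ and $n' = n-r+1$. Note that $m \ge n'$ and $n' \ge 2$, so the standing hypotheses still hold.

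\emph{Condition (R).} Fix $i \ne j$. The span $\langle \alpha_i, \alpha_j\rangle$ has dimension $2n-1$, and $\langle \beta_i, \beta_j\rangle$ has dimension $2(n-r)+1$ because $\beta_i \cap \beta_j = \emptyset$. Put $\pi'_{ij} = \langle \beta_i, \beta_j\rangle \cap \xi$; by the same argument as in the paper it has dimension $n-r$. Clearly $\pi'_{ij} \subseteq \pi_{ij}$.

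Now use the structure of $\mathcal S_{ij}$. Choose coordinates so that the regulus elements are $\{x + t\varphi(x) : x \in \alpha_i\}$ style graphs. Concretely, $\mathcal S_{ij}$ consists of the points on the transversal lines $\langle p, p^{\sigma}\rangle$, where $p \in \alpha_i$ and $\sigma: \alpha_i \to \alpha_j$ is the projectivity defined by the transversals through $\pi_{ij}$. Each transversal line meets $\xi$ in exactly one point, namely its point in $\pi_{ij}$. Take a point $p \in \beta_i$ and the transversal $T_p$ through it. We claim $T_p \subseteq \pi'$. Indeed, $T_p$ meets $\pi_{ij}$ in a point $z$, and $T_p$ meets $\alpha_j$ in $p^\sigma$. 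Since $p, z$ lie in different subspaces, it must be shown that $z \in \pi'$ or $p^\sigma \in \pi'$.

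The key observation is a dimension count on $\langle \beta_i, \beta_j\rangle \cap \mathcal S_{ij}$. The set $\{T_p : p \in \beta_i\}$ spans a subspace meeting $\alpha_j$ in $\beta_i^\sigma$ and meeting $\pi_{ij}$ in an $(n-r)$-space $\beta_i^{\tau}$, where $\tau$ is the transversal projectivity $\alpha_i \to \pi_{ij}$. The space $\pi' \cap \langle \alpha_i, \alpha_j\rangle$ contains $\beta_i, \beta_j$, and also $\pi_{ij} \cap \pi'$. Since $\pi' \supseteq \xi \supseteq \pi_{ij}$, the whole of $\pi_{ij}$ lies in $\pi'$, so $z \in \pi'$. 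Consequently $T_p = \langle p, z\rangle \subseteq \pi'$, and so $p^\sigma \in \beta_j$. Thus $\sigma$ maps $\beta_i$ onto $\beta_j$, since the dimensions are equal, and the transversals through points of $\beta_i$ form a Segre variety $\mathcal S_{1;n-r}$ inside $\langle \beta_i, \beta_j\rangle$. That variety is exactly $\mathcal S_{ij} \cap \pi'$, and it is the Segre variety determined by $\beta_i$, $\beta_j$ and $\pi'_{ij} = \beta_i^\tau$.

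Its regulus elements are the intersections $\alpha \cap \pi'$ for $\alpha$ in the regulus of $\mathcal S_{ij}$. By condition (R) for $S$, every such $\alpha$ other than $\pi_{ij}$ lies in $S$, so its intersection with $\pi'$ is some $\beta_t \in S_r$. This proves condition (R) for $S_r$.

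\emph{Main obstacle.} The hardest step is verifying carefully that $\mathcal S_{ij} \cap \pi'$ is precisely the Segre variety determined by $\beta_i$, $\beta_j$ and $\pi'_{ij}$, and not something larger. The plan is to handle this with the transversal description above: every point of $\mathcal S_{ij} \cap \pi'$ off $\xi$ lies on a transversal through some point of $\beta_t = \alpha_t \cap \pi'$, and that transversal lies in $\pi'$ by the same argument.
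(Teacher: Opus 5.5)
Your proof is correct and follows the paper's route. You intersect the Segre variety $\mathcal S_{ij}$ with $\pi'$, obtain the Segre variety $\mathcal S^\beta$ of type $\mathcal S_{1;n-r}$ on $\beta_i,\beta_j,\pi'_{ij}$, and read off its regulus from that of $\mathcal S_{ij}$ using (R) for $S$; your dimension count and transversal argument fill in details the paper only asserts.

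One slip, shared with the paper's own wording, needs fixing: for $r\ge 2$ the claim you flag as the \emph{main obstacle} is false. Since $\pi_{ij}\subseteq\xi\subseteq\pi'$, you get $\mathcal S_{ij}\cap\pi'=\mathcal S^\beta\cup\pi_{ij}$, not $\mathcal S^\beta$, and likewise $\pi_{ij}\cap\pi'=\pi_{ij}\neq\pi'_{ij}$. Replace that claim by $\mathcal S_{ij}\cap\langle\beta_i,\beta_j\rangle=\mathcal S^\beta$. Alternatively, note directly that for $\alpha_t\in S_{ij}$ the regulus element $\{T_p\cap\alpha_t : p\in\beta_i\}$ of $\mathcal S^\beta$ is an $(n-r)$-space contained in $\alpha_t\cap\pi'=\beta_t$, hence equals $\beta_t$; this is all that (R) for $S_r$ requires.
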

{\em Proof} 
Consider $\beta_i$ and $\beta_j$, $i \not= j$. Let $\mathcal {S}^\alpha_{1; n - 1}$ be the Segre variety defined by $\alpha_i$ and $\alpha_j$, see Section 2.
\par The intersection of $\mathcal {S}^\alpha_{1; n - 1}$ with $\PG(m + n - r, \bf K)$  is a Segre variety $\mathcal {S}^\beta_{1; n - r}$. The variety $\mathcal{S}^\alpha_{1; n - 1}$ contains the regulus $\lbrace \pi_{ij}, \alpha_i, \alpha_j, \ldots \rbrace$ and $\mathcal{S}^\beta_{1; n - r}$ contains the regulus $\lbrace \pi^\prime_{ij}, \beta_i, \beta_j, \ldots \rbrace$, $\pi^\prime_{ij} \subset \pi_{ij}$. Now it is clear that $S_r$ is a generalized affine space. \eop \\

\begin{corollary}
\label{cor2.8}
To prove results on generalized affine spaces, induction is possible.
\end{corollary}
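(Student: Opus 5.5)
The corollary is a direct consequence of Theorem \ref{thm2.7}. The plan is to make precise the sense in which a generalized affine space with parameters $(m,n)$ produces generalized affine spaces with strictly smaller $n$ and the same $m$ and $\bf K$. Then a statement $P(m,n)$ about generalized affine spaces can be proved by induction on $n\geq 2$, assuming $P(m,n')$ for $2\leq n'<n$, or at least for $n'=n-1$.

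First I would check the parameters in Theorem \ref{thm2.7}. Put $n' = n-r+1$, so that $\PG(m+n-r,{\bf K}) = \PG(m+n'-1,{\bf K})$. This space contains $\xi = \PG(m-1,{\bf K})$, and the sets $\beta_i = \alpha_i\cap\PG(m+n'-1,{\bf K})$ are $(n'-1)$-dimensional. The range $1\leq r\leq n-1$ gives $2\leq n'\leq n$, and $m\geq n\geq n'$. Hence the standing hypotheses $m\geq n'$, $n'\geq 2$ of Section 2 hold. Theorem \ref{thm2.7} then says that $\{\xi,\beta_1,\beta_2,\ldots\}$ is a space with an affine spread and that $S_r$ satisfies condition (R), so $S_r$ is again a generalized affine space. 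Taking $r=2$ gives the step $n\to n-1$, and taking $r=n-1$ reaches the base case $n'=2$, where the elements of $S_r$ are lines.

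Next I would record which structure is inherited, so that an inductive hypothesis applied to $S_r$ says something about $S$.
\begin{itemize}
\item The map $\alpha_i\mapsto\beta_i$ is a bijection $S\to S_r$, because every $\alpha_i$ meets $\PG(m+n'-1,{\bf K})$ in an $(n'-1)$-space, which is nonempty.
\item This bijection carries the reguli $S_{ij}$ onto the reguli of $S_r$, since by the proof of Theorem \ref{thm2.7} the Segre variety $\mathcal S^\beta_{1;n'-1}$ is the section of $\mathcal S^\alpha_{1;n-1}$.
\item The $m$-spaces $\eta_k$ lie in every such $\PG(m+n'-1,{\bf K})$ that contains them, so the affine spaces $\AG_k(m,{\bf K})$, the maps $\rho_{kl}$, and the affine space $\AG(m,{\bf K})$ on $S$ coincide with those of $S_r$.
\item The spaces $\pi'_{ij}$ are sections $\pi_{ij}\cap\PG(m+n'-1,{\bf K})$. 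So the set $\Pi_r$ consists of $(n'-1)$-dimensional subspaces of members of $\Pi$.
\end{itemize}

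The main obstacle is the converse direction: recovering information about $S$ or $\Pi$ from all the sections $S_r$. I would handle it by letting $\PG(m+n'-1,{\bf K})$ range over all subspaces through $\xi$ of that dimension. Every point of $\pi\setminus\xi$, and every pair $\alpha_i,\alpha_j$ together with a point of $\langle\alpha_i,\alpha_j\rangle$, lies in such a subspace. Hence a property of $S$ that is determined locally by pairs and their reguli follows from the corresponding property of the sections. Since the corollary only asserts that this reduction is available, the proof reduces to the parameter check and the inheritance statements above.
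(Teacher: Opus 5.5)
Your route is the paper's. The paper gives no separate argument: Corollary~\ref{cor2.8} is just Theorem~\ref{thm2.7} read as a reduction from $(m,n)$ to $(m,n')$ with $n'=n-r+1\in\{2,\ldots,n\}$. Your parameter check is correct, including $m\geq n'\geq 2$ and $r=2$ giving the step $n\to n-1$.

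One of your inheritance claims is false as written. Since $\pi_{ij}\subset\xi\subset\PG(m+n'-1,{\bf K})$, the section $\pi_{ij}\cap\PG(m+n'-1,{\bf K})$ is all of $\pi_{ij}$, not $\pi'_{ij}$.

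The correct description is $\pi'_{ij}=\langle\beta_i,\beta_j\rangle\cap\xi$. Equivalently, it is the set of points where the lines of $\mathcal S_{ij}$ through the points of $\beta_i$ meet $\pi_{ij}$. For $n'<n$ this is a proper subspace of $\pi_{ij}$ that depends on the chosen section. So your conclusion that elements of $\Pi_r$ lie in elements of $\Pi$ still holds (it is the paper's $\pi'_{ij}\subset\pi_{ij}$), but not for the reason you give.

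For the same reason, the full section of $\mathcal S^\alpha_{1;n-1}$ is $\mathcal S^\beta_{1;n'-1}\cup\pi_{ij}$, not $\mathcal S^\beta_{1;n'-1}$ alone. The paper's proof of Theorem~\ref{thm2.7} has the same imprecision. The correspondence $\alpha_t\mapsto\beta_t$ between the two reguli is unaffected.

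Finally, in your last paragraph only points, not the spaces $\alpha_i$ themselves, lie in such a section. Indeed $\langle\xi,\alpha_i\rangle=\PG(m+n-1,{\bf K})$, so no $\alpha_i$ is contained in a proper subspace through $\xi$.
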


\begin{remark}
\label{rem2.9}
Let $\bf K$ = GF($q$), $q > 2$, and assume that $\Pi$ is a $(n - 1)$-spread of $\xi$. Then $n$ divides $m$, see \ref{rem2.6}. Let $\Pi^\prime$ be the set of $(n - r)$-subspaces of $\xi$ corresponding to $\PG(m + n - r, q)$. If $n - r +1$ does not divide $m$, then $\Pi^\prime$ cannot be a spread of $\xi$.
\end{remark}

\section {the conditions (T$\mathrm i$), $\mathrm i$ = 1, 2}
In this section we define conditions (Ti), i = 1, 2, which will be used to characterize particular generalized affine spaces.
\par Consider again the space $\PG( m + n - 1, {\bf K}), \vert {\bf K} \vert \geq 2$, the space $\xi$ and also the set $S = \lbrace \alpha_1, \alpha_2, \ldots \rbrace$ , with $\xi \subset \PG(m + n - 1, \bf K)$ an $(m - 1)$-dimensional subspace  and $\alpha_i$ an $(n - 1)$-dimensional subspace of $\PG(m + n - 1, \bf K)$. The set $\lbrace \xi, \alpha_1, \alpha_2, \ldots \rbrace$ partitions $\PG(m + n - 1, \bf K)$.
\bigskip

{\bf Definition : The conditions}
\begin{itemize}
\item [(i)] {\bf Condition (T1)} Let $z \in \xi, \langle z, \alpha_i \rangle = \alpha^\prime_i$ with $\alpha_i \in S$, and $\Gamma (z, \alpha_i)$ the set of all elements of $S$ intersecting $\alpha^\prime_i$. If $\alpha_j \in \Gamma (z, \alpha_i)$, assume that $\Gamma (z, \alpha_j) = \Gamma (z, \alpha_i)$, for each choice of $z$ and $\alpha_i$.
\item [(ii)] {\bf Condition (T2)} Assume that $S$ satisfies the regulus condition. Let $\alpha^\prime_i$ be an $n$-dimensional subspace of $\PG(m + n - 1, \bf K)$, $\vert \bf K \vert > 2$, containing $\alpha_i$. Consider all elements $\alpha_{j_1}, \alpha_{j_2}, \ldots$ of $S$ containing a point of $\alpha_i^\prime$ (then $\lbrace \alpha_{j_1}, \alpha_{j_2}, \ldots  \rbrace = \Gamma (z, \alpha_i)$, with $\lbrace z \rbrace = \xi \cap \alpha^\prime_i$). Assume that for each pair $(\alpha_i, \alpha^\prime_i), \alpha_i \in S$, the set $\lbrace \alpha_{j_1},\alpha_{j_2}, \ldots \rbrace$ is an affine subspace of $\AG(m, \bf K)$.
\end{itemize}
\bigskip

\begin{theorem}
\label{thm3.1}
For generalized affine spaces with $\vert {\bf K} \vert > 2$ conditions (T1) and (T2) are equivalent.
\end{theorem}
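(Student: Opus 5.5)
The plan is to translate both conditions into statements about the affine space $\AG(m,\bf K)$ with point set $S$ constructed after Corollary \ref{cor2.5}, and to show that each condition holds exactly when a certain set of directions $D(z)\subseteq\xi$ is a projective subspace.

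Fix $z\in\xi$ and $\alpha_i\in S$, and put $\alpha^\prime_i=\langle z,\alpha_i\rangle$. For each $m$-space $\eta_l\supset\xi$, the point $x^i_l=\alpha_i\cap\eta_l$ is defined. The intersection $\alpha^\prime_i\cap\eta_l$ is the line $x^i_lz$. Every point of $\alpha^\prime_i\setminus\xi$ lies in some $\eta_l$. Hence $\Gamma(z,\alpha_i)$ is the union, over all $l$, of the sets of elements of $S$ meeting the affine line $x^i_lz\setminus\{z\}$ of $\AG_l(m,\bf K)$. By Lemma \ref{lem2.1}, each such set is a line $S_{ij}$ of $\AG(m,\bf K)$ through the point $\alpha_i$.

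Now identify $\AG(m,\bf K)$ with $\AG_k(m,\bf K)$ for a fixed $k$. By Theorem \ref{thm2.2} and Corollary \ref{cor2.3}, $\rho_{lk}$ is a projectivity preserving parallelism. So it induces a collineation $\sigma_{lk}$ on the common hyperplane at infinity $\xi$ of $\eta_k$ and $\eta_l$. Consequently the line of $\AG(m,\bf K)$ arising from $\eta_l$ is the line through $x^i_k$ with point at infinity $z^{\sigma_{lk}}$.

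Put $D(z)=\{z^{\sigma_{lk}}: l\}$, with $\sigma_{kk}$ the identity. The key point is that $D(z)$ depends only on $z$ and not on $\alpha_i$. Thus
\[
\Gamma(z,\alpha_i)=\{\text{points on lines through }\alpha_i\text{ with direction in }D(z)\},
\]
that is, in vector terms, $\Gamma(z,\alpha_i)=x+\hat C$. Here $x$ is the point $\alpha_i$, and $\hat C$ is the union of the $1$-dimensional vector subspaces with directions in $D(z)$. In particular $\hat C$ is closed under multiplication by scalars of $\bf K$. Establishing this cone description cleanly, especially the independence of $D(z)$ from the vertex via the maps $\rho_{kl}$, is the step I expect to need the most care.

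With the cone description in hand, the equivalence is short.

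\emph{(T2) $\Rightarrow$ (T1).} Under (T2), $x+\hat C$ is an affine subspace for every vertex $x$. So $\hat C$ is a vector subspace $U$, and $D(z)$ is the projective subspace of $\xi$ determined by $U$. If $\alpha_j\in\Gamma(z,\alpha_i)$, then $\alpha_j$ is a point $y\in x+U$. Then $\Gamma(z,\alpha_j)=y+U=x+U=\Gamma(z,\alpha_i)$, which is (T1).

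\emph{(T1) $\Rightarrow$ (T2).} Assume (T1). For every $v\in\hat C$ the point $y=x+v$ lies in $\Gamma(z,\alpha_i)$, so (T1) gives $y+\hat C=x+\hat C$. Hence $x+\hat C$ is invariant under translation by every vector of $\hat C$. Iterating, $x+\hat C$ contains $x+v_1+\dots+v_s$ for all $v_1,\dots,v_s\in\hat C$. Since $\hat C$ is closed under scalars, this gives $x+\hat C=x+\langle\hat C\rangle$, the full $\bf K$-span. So $\Gamma(z,\alpha_i)$ is an affine subspace of $\AG(m,\bf K)$, which is (T2).

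The hypothesis $\vert{\bf K}\vert>2$ enters only through Theorem \ref{thm2.2}, namely to have $\AG(m,\bf K)$ and the parallelism-preserving maps $\rho_{kl}$ available.
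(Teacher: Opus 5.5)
Your proof is correct, but it takes a different route from the paper's.

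\textbf{The paper's route.} The paper argues locally in each direction.

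For (T1)$\Rightarrow$(T2), it takes $\alpha_{j_1},\alpha_{j_2}\in\Gamma(z,\alpha_i)$ and uses (T1) to get a line $L$ on $z$ meeting both. By Lemma \ref{lem2.1}, the elements of $S$ meeting $L\setminus\xi$ are exactly the $\AG(m,{\bf K})$-line through $\alpha_{j_1},\alpha_{j_2}$, and they lie in $\Gamma(z,\alpha_{j_1})=\Gamma(z,\alpha_i)$. It then concludes from closure under joining lines, which is where $|{\bf K}|>2$ is used.

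For (T2)$\Rightarrow$(T1), it uses Theorem \ref{thm2.4}. Each line through $\alpha_j$ inside the affine subspace $\Gamma(z,\alpha_i)$ is parallel to a line through $\alpha_i$ there. So its space at infinity contains $z$, and hence its points lie in $\Gamma(z,\alpha_j)$.

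\textbf{Your route.} You instead prove a single structural fact valid for every generalized affine space: in the coordinates of a fixed $\AG_k(m,{\bf K})$, $\Gamma(z,\alpha)=\alpha+\hat C_z$, with the cone $\hat C_z$ independent of $\alpha$.
\begin{itemize}
\item The vertex-independence is immediate once Corollary \ref{cor2.3} makes $\sigma_{lk}$ well defined on directions. So the step you flagged as needing the most care is not actually delicate.
\item Both implications then reduce to a linear-algebra fact: a cone closed under scalars is a vector subspace if and only if it is closed under addition.
\end{itemize}

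\textbf{Comparison.} Your route gives a sharper conclusion. (T1) and (T2) are each equivalent to every $D(z)$ being a projective subspace of $\xi$. Moreover, (T2) at a single $\alpha_i$ already yields both conditions for all $\alpha$ at that $z$.

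The paper's (T1)$\Rightarrow$(T2) is more economical. It needs only Lemma \ref{lem2.1} and the line-closure criterion, not the projectivities $\rho_{kl}$ of Theorem \ref{thm2.2}. Its (T2)$\Rightarrow$(T1) stays with spaces at infinity and lines through $z$, which is the viewpoint Section 5 reuses for the spaces $\pi_{z,\alpha_i}$.
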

{\em Proof}
Let $S$ be a generalized affine space, with $S = \lbrace \alpha_1, \alpha_2, \ldots \rbrace$.
\par Assume condition (T1) is satisfied. We have to prove that $\Gamma (z, \alpha_i)$, with $z \in \xi$ and $\alpha_i \in S$, is an affine subspace of $\PG(m, \bf K)$. Let $\alpha_{j_1}, \alpha_{j_2} \in \Gamma (z, \alpha_i), j_1 \not= j_2$. Then $\alpha_{j_2} \in \Gamma (z, \alpha_{j_1})$. Let $L$ be the line containing $z$ and intersecting $\alpha_{j_1}$ and $\alpha_{j_2}$
The elements of S containing a point of $L \setminus \xi$ are contained in $\Gamma (z, \alpha_i)$. Hence the line of $\AG(m, \bf K)$ containing
$\alpha_{j_1}, \alpha_{j_2}$ belongs to $\Gamma (z, \alpha_i)$, and so, as $\vert {\bf K} \vert > 2$, $\Gamma (z, \alpha_i)$ is a subspace of $\AG(m, \bf K)$ (p. 24 of \cite {U:11}). Hence (T2) is satisfied.
\par With each line $zy$ with $y \in \alpha_i$ corresponds a parallel class of $\Gamma (z, \alpha_i)$, and so the subspace $\Gamma (z, \alpha_i)$ of $\AG(m, \bf K)$ is $n$-dimensional for $\bf K$ = GF($q$).
\par Next assume that condition (T2) is satisfied. Consider $z \in \xi, \langle z, \alpha_i \rangle = \alpha_i^\prime$ with $\alpha_i \in S$, and also the set $\Gamma (z, \alpha_i) = \AG(r, \bf K)$. Let $\alpha_j \in \Gamma (z, \alpha_i), i \not= j$. By {\ref {thm2.4}} the set of spaces $\langle \alpha_i, \alpha_u \rangle \cap \xi = \pi^\ast$ with $\alpha_u \in \AG(r, \bf K)$, $u \not= i$, and the set of spaces $\langle \alpha_j, \alpha_u \rangle \cap \xi = \pi^{\ast \ast}$ with $\alpha_u \in \AG(r, \bf K)$, $u \not= j$, coincide. All these spaces $\pi^\ast$ and $\pi^{\ast\ast}$ contain the point $z$. It follows that $\alpha_u \in \Gamma (z, \alpha_j)$. Consequently $\Gamma (z, \alpha_i) = \Gamma (z, \alpha_j)$.  \eop \\
\bigskip

\begin{remark}
\label{rem3.2}
\begin{itemize}
\item [(i)] Let us consider a space $\PG(m + n - 1, {\bf K}), \vert {\bf K} \vert \geq 2$, with an affine spread $\lbrace \xi, \alpha_1, \alpha_2, \ldots \rbrace$. Let $S = \lbrace \alpha_1, \alpha_2, \ldots \rbrace, z \in \xi, \alpha_i \in S, \Gamma (z, \alpha_i) = S^\prime$ and assume that (T1) is satisfied. Then for any two elements $\alpha_j, \alpha_u \in S^\prime, j \not= u$, there is exactly one line $L$ on $z$ intersecting $\alpha_j, \alpha_u$ and this line $L$ intersects exactly $\vert \bf K \vert$ elements of $S^\prime$.  
\par Conversely, consider in $\PG(m + n - 1, \bf K)$, $\vert \bf K \vert \geq 2$, a generalized affine space with affine spread $\lbrace \xi, \alpha_1, \alpha_2, \cdots \rbrace$. If for any $z \in \xi$ and any two spaces $\alpha_i, \alpha_j, i \not= j$, there is exactly one iine  $L$ containing $z$ and intersecting  $\alpha_i$ and $\alpha_j$, then (T1) is satisfied.
\item [(ii)] Consider a generalized affine space $S$ in the space $\PG(m + n - 1, {\bf K}), \vert {\bf K} \vert > 2$, and assume that (T1) is satisfied. Then, for fixed $z \in \xi$, the affine subspaces $\Gamma (z, \alpha_i), \alpha_i \in S$, partition $\AG(m, \bf K)$.
\item [(iii)] Consider again a space $\PG(m + n - 1, {\bf K}), \vert {\bf K} \vert \geq 2$, with an affine spread $\lbrace \xi, \alpha_1, \alpha_2, \ldots \rbrace$. Let $z \in \xi, \alpha_i \in S$, and assume that (T1) is satisfied. Then, for fixed $z \in \xi$ the sets $\Gamma (z, \alpha_i), \alpha_i \in S$, partition $S$.
\end{itemize}
\end{remark}
\bigskip

\section {Projections}
Choose a hyperplane $\PG(m + n - 2, \bf K)$ in $\PG(m + n - 1, \bf K), \xi \not \subseteq \PG(m + n - 2, \bf K)$, and choose a point $z \in \xi, z \not \in \PG(m + n - 2, \bf K)$. We project from $z$ onto $\PG(m + n - 2, \bf K)$
and consider the projection of $S^\prime = \Gamma (z, \alpha_i)$ with $\alpha_i \in S$. We assume that (T1) is satisfied and that $\vert {\bf K} \vert \geq 2$.
\par Let $\alpha^\ast_j$ be the projection of $\alpha_j \in S^\prime$ and let $S^\ast$ be the set of all $\alpha^\ast_j$. The union of all elements of $S^\ast$ is $\widetilde{S}$. The projection of $\xi \setminus \lbrace z \rbrace$ is an $(m - 2)$-dimensional space $\xi^\ast$.
\par From Section 3 follows that each point of $\widetilde{S}$ is contained in $\vert \bf K \vert$ elements of $S^\ast$ and that any two distinct elements of $S^\ast$ have exactly one point in common. If $\alpha_i$ varies in $S$, then all sets $\widetilde{S}$ form a partition $\mathcal {P}$ of $\PG(m + n -2, \bf K) \setminus \xi^\ast$.
\par Now let $\bf K$ = GF($q$). Then $\vert \widetilde{S} \vert = q^{n - 1}(q^n - 1)/(q - 1)$, each point of $\widetilde{S}$ is contained in $q$ elements of $S^\ast$ and any two distinct elements of $S^\ast$ have exactly one point in common. If $\alpha_i$ varies in the set $S$, then the $q^{m - n}$ sets $\widetilde S$ form a partition $\mathcal {P}$ of $\PG(m + n - 2, q) \setminus \xi^\ast$.
\par Particularly interesting is the case $q = 2$, see Section 8. Then $ \vert \widetilde S \vert = 2^{n - 1}(2^n - 1)$, each point of $\widetilde S$ is contained in two elements of $S^\ast$ and any two distinct elements of $S^\ast$ have exactly one point in common. If $\alpha_i$ varies in $S$, then the $2^{m - n}$ sets $\widetilde S$ form a partition $\mathcal P$ of $\PG(m + n - 2, 2) \setminus \xi^\ast$.

\section {Generalized affine spaces satisfying (T1)}
We consider a generalized affine space $S$ in $\PG(m + n - 1, \bf K), \vert \bf K \vert > 2$,satisfying condition (T1), and consequently also condition (T2).
\bigskip

\begin{theorem}
\label{thm5.1}
Let $\Pi$ consist of all ''spaces at infinity" of the generalized affine space $\AG(m, \bf K)$ defined by the set $S$ which satisfies (T1). Then $\Pi$ is an $(n - 1)$-spread of $\xi$. Consequently, for $\bf K$ = GF($q$) the integer $n$ divides $m$.
\end{theorem}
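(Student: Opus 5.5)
By Remark \ref{rem2.6}(iii), $\xi$ is already the union of the elements of $\Pi$. So the whole task is to show that two distinct elements of $\Pi$ are disjoint. Equivalently, for every point $z\in\xi$ there is exactly one $\bar\pi\in\Pi$ containing $z$. The divisibility statement then follows verbatim from Remark \ref{rem2.6}(iv).

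Fix $z\in\xi$ and suppose $z\in\bar\pi_1\cap\bar\pi_2$ with $\bar\pi_1,\bar\pi_2\in\Pi$. By Corollary \ref{cor2.5} and the discussion after it, we may pick $\alpha_i\in S$ and take lines $S_{ij}$ and $S_{iu}$ of $\AG(m,{\bf K})$ through the point $\alpha_i$ such that
\[
\pi_{ij}=\langle\alpha_i,\alpha_j\rangle\cap\xi=\bar\pi_1,\qquad \pi_{iu}=\langle\alpha_i,\alpha_u\rangle\cap\xi=\bar\pi_2 .
\]
This is possible because every parallel class has a representative through each point of $\AG(m,{\bf K})$. Since $z\in\pi_{ij}$, there is a line on $z$ meeting both $\alpha_i$ and $\alpha_j$: it is a transversal of the regulus through $z$. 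Hence $\alpha_j\in\Gamma(z,\alpha_i)$, and the same argument gives $\alpha_u\in\Gamma(z,\alpha_i)$.

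Now use (T1), in the form of Remark \ref{rem3.2}(i), together with (T2) from Theorem \ref{thm3.1}. The set $\Gamma(z,\alpha_i)$ is an affine subspace of $\AG(m,{\bf K})$. Its parallel classes of lines through $\alpha_i$ correspond bijectively to the lines $zy$ with $y\in\alpha_i$, as in the proof of Theorem \ref{thm3.1}. The key claim is that each line $S_{iv}$ of $\Gamma(z,\alpha_i)$ through $\alpha_i$ has $\pi_{iv}=\langle z,\alpha_i\rangle\cap\dots$; more precisely, I would show that all these $\pi_{iv}$ coincide with one fixed $(n-1)$-space $\pi_z$ through $z$.

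Here is the argument for the key claim. The transversal $zy$ of the regulus $S_{iv}\cup\{\pi_{iv}\}$ meets $\pi_{iv}$ in $z$. Every element of $S_{iv}$ lies in $\Gamma(z,\alpha_i)$. Consider $\langle z,\alpha_i\rangle=\alpha_i'$, which is $n$-dimensional. The regulus spaces $\langle\alpha_i,\alpha_v\rangle$ for $\alpha_v\in\Gamma(z,\alpha_i)$ all contain $\alpha_i'$. I would then use that $\Gamma(z,\alpha_i)$ is an $n$-dimensional affine subspace: its elements span an $(2n-1)$-space $\Sigma$ meeting $\xi$ in an $(n-1)$-space $\pi_z$. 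This intersection is $(n-1)$-dimensional because $\Sigma$ is partitioned by $\pi_z$ and the elements of $\Gamma(z,\alpha_i)$, and one counts dimensions via the lines on $z$ in Remark \ref{rem3.2}(i). Each $\pi_{iv}\subseteq\Sigma\cap\xi=\pi_z$, and both have dimension $n-1$, so $\pi_{iv}=\pi_z$. Applying this with $v=j$ and $v=u$ gives $\bar\pi_1=\pi_z=\bar\pi_2$.

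I expect the main obstacle to be establishing $\dim\langle\Gamma(z,\alpha_i)\rangle=2n-1$ for general ${\bf K}$ (not only finite fields). My plan is to build $\Gamma(z,\alpha_i)$ from $\alpha_i$ by successively adjoining reguli along lines $zy$ through a basis of $\alpha_i$. Each step adds at most one dimension within $\xi$, since the regulus through $z$ adds only the direction $z$ plus the points of $\pi_{iv}$ already forced. Alternatively, I would induct on $n$ using Theorem \ref{thm2.7} and Corollary \ref{cor2.8}, where the case $n=1$ is trivial.
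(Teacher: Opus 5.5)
\textbf{There is a genuine gap: your key claim is never proved.}

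Your reduction is correct and is the one the paper makes. By Corollary~\ref{cor2.5} every element of $\Pi$ is a space $\pi_{iv}$ for a line $S_{iv}$ through a fixed $\alpha_i$, and $z\in\pi_{iv}$ iff $\alpha_v\in\Gamma(z,\alpha_i)$. So the theorem is equivalent to your key claim: all $\pi_{iv}$ with $\alpha_v\in\Gamma(z,\alpha_i)\setminus\{\alpha_i\}$ coincide, or equivalently $\langle\Gamma(z,\alpha_i)\rangle$ is $(2n-1)$-dimensional. That claim is the entire content of the theorem, and none of your arguments establishes it.

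\emph{The partition argument is circular.} That $\langle\Gamma(z,\alpha_i)\rangle$ is partitioned by $\pi_z$ and the elements of $\Gamma(z,\alpha_i)$ is a consequence of the key claim; it is exactly what the paper deduces after establishing it.

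\emph{$n$-dimensionality of $\Gamma(z,\alpha_i)$ does not bound the span.} For infinite ${\bf K}$ this fact is not yet available: Theorem~\ref{thm3.1} gives it only for ${\bf K}=\mathrm{GF}(q)$, and Remark~\ref{rem5.6} derives it from the present theorem. Even granting it, in each $\AG_k(m,{\bf K})$ the elements of $\Gamma(z,\alpha_i)$ cut out an affine $n$-space whose space at infinity $\sigma_k$ is an $(n-1)$-space of $\xi$ through $z$. Then $\langle\Gamma(z,\alpha_i)\rangle\cap\xi=\langle\sigma_k : k\rangle$, with $\sigma_l=\sigma_k^{\rho_{kl}}$. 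Since $\rho_{kl}$ does not fix $\xi$ pointwise (even in the regular case), nothing prevents $\sigma_k$ from moving with $k$. Saying that $\sigma_k$ is independent of $k$ is just the key claim again.

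\emph{Adjoining reguli does not help.} The first regulus $S_{iv_1}$ already brings the whole $(n-1)$-space $\pi_{iv_1}$ into $\xi$. Adjoining $S_{iv_2}$ adds nothing new exactly when $\pi_{iv_2}=\pi_{iv_1}$, which is what you want to prove. So ``each step adds at most one dimension'' has no justification. Moreover, the map sending $y\in\alpha_i$ to the class of the line $zy$ is not known to be linear, so reguli along a basis of $\alpha_i$ need not even generate $\Gamma(z,\alpha_i)$.

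\emph{Induction via Theorem~\ref{thm2.7} fails,} because only (R), not (T1), passes to the sections $S_r$. Take the regular space from $\AG(2,q^3)$, modelled as the spaces $\{(au,u): u\in\mathrm{GF}(q^3)\}$ with $a\in\mathrm{GF}(q^6)$. A section by a $\PG(m+1,q)\supset\xi$ replaces $\mathrm{GF}(q^3)$ by a $2$-dimensional $\mathrm{GF}(q)$-subspace $U'$. For the section, (T1) would force $\{u^{-1}: u\in U'\setminus\{0\}\}\cup\{0\}$ to be a $\mathrm{GF}(q)$-subspace. This fails because $\mathrm{GF}(q^3)$ contains no $\mathrm{GF}(q^2)$.

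Your argument only ever uses that $\Gamma(z,\alpha_i)$ is an affine subspace, and gives no mechanism by which that pins down $\pi_{iv}$ beyond $z\in\pi_{iv}$. At exactly this point the paper brings in the projectivities $\rho_{kl}$ of Theorem~\ref{thm2.2}. It argues that, besides $z$, the images $z^{\rho_{kl}}$ also lie in every $(n-1)$-space determined by a parallel class of $\Gamma(z,\alpha_i)$, and that this forces these spaces to coincide. Your proposal uses neither $\rho_{kl}$ nor any substitute, such as exploiting (T1) at points of $\xi$ other than $z$.
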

{\em Proof}
Let $z \in \xi$ and $\alpha_i \in S$. Let $\alpha_u, \alpha_v$ be distinct elements of the set $\Gamma (z, \alpha_i) = \AG(r, \bf K)$, see Section 3. Then $\alpha_v \in \Gamma (z, \alpha_u)$ by (T1). So there is a line $L$ containing $z$ and intersecting $\alpha_u$ and $\alpha_v$. Consider the $\vert {\PG(r - 1, \bf K)} \vert$ lines of $\AG(r, \bf K)$ containing $\alpha_i$. These lines represent the $\vert {\PG( r - 1, \bf K)} \vert$ parallel classes of lines in $\AG(r, \bf K)$.
\par These parallel classes define {\bf at most} $\vert \PG(r - 1, \bf K) \vert$ {\bf distinct} $(n - 1)$-spaces in $\xi$, and they all belong to $\Pi$. Let $\pi_{j_1}, \pi_{j_2}, \ldots$ be these distinct $(n - 1)$-spaces . The point $z$ belongs to all these $(n - 1)$-spaces. Also the point $z^{\rho_{kl}}$ belongs to all these spaces, see Section 2. It follows that $\pi_{j_1} = \pi_{j_2} = \ldots = \pi_{z, \alpha_i}$; remark that $\pi_{z, \alpha_i} = \pi_{z, \alpha_u}$ for all $\alpha_u \in \Gamma (z, \alpha_i)$.
\par Consequently all elements of $\Gamma (z, \alpha_u)$ are contained in $\langle \pi_{z, \alpha_i}, \alpha_i \rangle$ for all $\alpha_u \in \Gamma (z, \alpha_i)$.
\par Let $z^\prime \in \pi_{z, \alpha_i}$. As $\langle \pi_{z, \alpha_i}, \alpha_i \rangle$ is $(2n - 1)$-dimensional each element of $\Gamma (z, \alpha_i)$ contains a point of $\langle z^\prime, \alpha_i \rangle$. Hence $\Gamma (z, \alpha_i) \subseteq \Gamma (z^\prime, \alpha_i)$. So $\pi_{z^\prime, \alpha_i} = \pi_{z, \alpha_i}$ and $\Gamma (z^\prime, \alpha_i) \subseteq \Gamma (z, \alpha_i)$. That is, $\Gamma (z^\prime, \alpha_i)$ is independent of the choice of $z^\prime$ in $\pi_{z, \alpha_i}$.
\par The spaces of $\Gamma (z, \alpha_i)$ are disjoint $(n - 1)$-spaces in $\langle \pi_{z, \alpha_i}, \alpha_i \rangle$. Assume, by way of contradiction, that there is a point $y$ in $\langle \pi_{z, \alpha_i}, \alpha_i \rangle$ which is not in an element of $\Gamma (z, \alpha_i)$. Let $y \in \alpha_s, s \not= i$, and let $\lbrace z^\prime \rbrace = \pi_{z, \alpha_i}  \cap \langle y, \alpha_i \rangle$. Then $\alpha_s \in \Gamma (z^\prime, \alpha_i)$. Hence $\alpha_s \in \Gamma (z, \alpha_i)$ and so $\alpha_s \subset \langle \pi_{z, \alpha_i}, \alpha_i \rangle$, a contradiction. It follows that $\Gamma (z, \alpha_i) \cup \lbrace \pi_{z, \alpha_i} \rbrace$ is an $(n - 1)$-spread of $\langle \pi_{z, \alpha_i}, \alpha_i \rangle$.
\par Now we show that the spaces $\pi_{z, \alpha_i}$, $z$ any point of $\xi$ and $\alpha_i$ any element of $S$, define an $(n - 1)$-spread of $\xi$.
\par Consider $\pi_{z, \alpha_i}$ and $\pi_{z^\prime, \alpha^\prime_i}$ with $z, z^\prime \in \xi$ and $\alpha_i, \alpha^\prime_i \in S$, and assume that $z^{\prime\prime} \in \pi_{z, \alpha_i} \cap \pi_{z^\prime, \alpha^\prime_i}$. Then $\Gamma (z, \alpha_i) = \Gamma (z^{\prime\prime}, \alpha_i)$ and $\Gamma (z^\prime, \alpha^\prime_i) = \Gamma  (z^{\prime\prime}, \alpha^\prime_i)$. Let $\eta_k$ be an $m$-dimensional subspace of $\PG(m + n - 1, \bf K)$ containing $\xi$. Then $\langle \pi_{z, \alpha_i}, \alpha_i \rangle \cap \eta_k = \rho_k$ is $n$-dimensional and $\rho_k = \langle \pi_{z, \alpha_i}, u \rangle$ with $u \in \alpha_i$. The affine space $\AG_k(m, \bf K)$ defined by $\eta_k$ contains the affine line $uz^{\prime\prime} \setminus \lbrace z^{\prime\prime} \rbrace = L$, and, similarly, the affine space $\AG_k(m, \bf K)$ contains an affine line $u^{\prime} z^{\prime\prime} \setminus \lbrace z^{\prime\prime} \rbrace = L^\prime$ with $u^\prime \in \alpha^{\prime}_i$. In $\AG_k(m, \bf K)$ the lines $L$ and $L^\prime$ are parallel. Consequently $\pi_{{z^{\prime\prime}}, \alpha_i} = \pi_{{z^{\prime\prime}}, \alpha^\prime_i}$.
\par We conclude that the "spaces at infinity"  of $S$ define an $(n - 1)$-spread of $\xi$.
\par Hence, if $\bf K$ = GF($q$), then by \ref {rem2.6}  the integer $n$ divides $m$. \eop \\
\bigskip

\par Assume again that $S$ satisfies (T1), with $\vert \bf K \vert > 2$, and let $\Pi$ be the set of all spaces at infinity $\pi_{z, \alpha_i}$. Let $B$ be the set of all spaces $\langle \pi_{z, \alpha_i}, \alpha_i \rangle, z \in \xi$ and $\alpha_i \in S$, and let us consider the "point-line" incidence structure $\mathcal D = (S, B, \in)$.
\par By the proof of {\ref{thm5.1}} $\Gamma (z, \alpha_i) \cup \lbrace \pi_{z, \alpha_i} \rbrace$ is an $(n - 1)$-spread of $\langle \pi_{z, \alpha_i}, \alpha_i \rangle$.
\par If $\beta, \beta^\prime \in B$ define the same space at infinity $\bar {\pi} \in \Pi$, then we say that $\beta$ and $\beta^\prime$ are {\em parallel} and write $\beta \parallel \beta^\prime$.
\bigskip

\begin{theorem}
\label{thm5.2}
The incidence structure $\mathcal D$ is an affine space $\AG(m/n, \bf L)$ over a skewfield $\bf L$, with $\bf L$ an $n$-th extension of $\bf K$. Hence $n$ divides $m$.
\end{theorem}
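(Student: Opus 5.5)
We will verify the axioms of an affine space on $\mathcal D = (S, B, \in)$ and then identify the coordinatizing division ring via the spread $\Pi$ of $\xi$. The cleanest route is the André/Bruck–Bose picture. By Theorem~\ref{thm5.1}, $\Pi$ is an $(n-1)$-spread of $\xi$. Every element $\beta = \langle \bar\pi, \alpha_i\rangle$ of $B$ is a $(2n-1)$-space in which $\Gamma(z,\alpha_i) \cup \lbrace \bar\pi \rbrace$ is an $(n-1)$-spread (proof of Theorem~\ref{thm5.1}), with $z \in \bar\pi$.

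\textbf{Step 1: $\mathcal D$ is a linear space with the parallel axiom.}
\begin{itemize}
\item \emph{Two points on a unique line.} Take distinct $\alpha_i, \alpha_j \in S$. Pick $z \in \pi_{ij}$. The regulus condition gives a line through $z$ meeting both, so $\alpha_j \in \Gamma(z,\alpha_i)$ and hence $\alpha_j \subset \langle \pi_{z,\alpha_i}, \alpha_i\rangle$. If two blocks contain $\alpha_i$ and $\alpha_j$, their spaces at infinity both contain $\langle \alpha_i,\alpha_j\rangle \cap \xi = \pi_{ij}$, which is $(n-1)$-dimensional. Since $\Pi$ is a spread, both equal $\pi_{ij}$, so the two blocks coincide.
\item \emph{Parallel axiom.} Given $\beta \in B$ with space at infinity $\bar\pi$ and a point $\alpha_u$, the block $\langle \bar\pi, \alpha_u\rangle$ is the unique block through $\alpha_u$ parallel to $\beta$. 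Parallel blocks are disjoint or equal, since a common element $\alpha_u$ forces equality.
\item \emph{Dimension.} $\mathcal D$ contains the lines of the $\AG(m,\mathbf K)$ on $S$ (the reguli $S_{ij}$, which lie in blocks), so it is not a single block.
\end{itemize}

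\textbf{Step 2: Desarguesian behaviour and coordinatization.} Embed everything in $\PG(m+n-1,\mathbf K)$ and add $\Pi$ as the points at infinity. Take $\mathbf K$-subspaces of $\pi$ that meet $\xi$ in a union of spread elements and meet $\pi\setminus\xi$ in a union of elements of $S$. These give the subspaces of $\mathcal D$, and the resulting lattice is modular: intersections and joins are computed in the ambient projective space. For $m/n \ge 2$ we obtain a projective closure of dimension $\ge 2$ satisfying Desargues, because in the case $m/n=2$ it sits inside the ambient Desarguesian space. We then show that the planes of $\mathcal D$ are Desarguesian, using the perspectivities $\rho_{kl}$ from Theorem~\ref{thm2.2} together with central collineations of $\pi$ fixing $\xi$ pointwise. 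So $\mathcal D \cong \AG(d,\mathbf L)$ for some skewfield $\mathbf L$ (Veblen–Young for $d \ge 3$, Desargues for planes), and Step 3 identifies $\mathbf L$.

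\textbf{Step 3: identify $\mathbf L$.} Fix a block $\beta$ with spread $\Gamma \cup \lbrace\bar\pi\rbrace$. Show that this spread is regular, i.e.\ Desarguesian: any three of its elements determine a regulus contained in it. This holds because Condition (R) places the whole regulus on $\alpha_i,\alpha_j,\pi_{ij}$ inside $S_{ij} \subset \Gamma$, while $\bar\pi = \pi_{ij}$. A spread of $\PG(2n-1,\mathbf K)$ closed under reguli yields, by the Bruck–Bose/André theorem, an $n$-dimensional division algebra $\mathbf L$ over $\mathbf K$. This is the field (or skewfield) such that the line $\Gamma \cong \AG(1,\mathbf L)$, so $\mathbf L$ is an $n$-th extension of $\mathbf K$. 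Counting dimensions, the $\mathbf K$-vector space $\mathbf K^m$ underlying $\AG(m,\mathbf K)$ becomes an $\mathbf L$-vector space, so $d = m/n$ and $n \mid m$.

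\textbf{Main obstacle.} The hardest step is showing that the division rings attached to different lines agree and that scalar multiplication is compatible globally. Concretely, the $\mathbf L$-structures on $\mathbf K^m$ coming from different spread elements of $\Pi$ must patch into one $\mathbf L$-module structure, i.e.\ the Desargues/coordinatization step must hold in planes of $\mathcal D$. The first attempt will use the translations of $\AG(m,\mathbf K)$, which by Theorems~\ref{thm2.2}–\ref{thm2.4} map $S$ to itself and preserve parallel classes of blocks, and will derive that the kernel of this translation structure is $\mathbf L$.
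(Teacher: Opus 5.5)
\textbf{There is a genuine gap.} The decisive step is that $\mathcal D'$ is Desarguesian and coordinatized by an $n$-th extension $\mathbf L$ of $\mathbf K$. Your proposal does not establish it, and the one concrete argument you give for it fails.

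\emph{Step 3 is wrong.} You claim that condition (R) makes the block spread $\Gamma\cup\{\bar\pi\}$ regular. But (R) only supplies reguli through the element at infinity $\bar\pi=\pi_{ij}$ and two elements of $\Gamma$, never through three elements of $\Gamma$. Take coordinates in the block with $\bar\pi$ as $x=0$ and the elements of $\Gamma$ as $y=xA$. The regulus on $x=0$, $y=xA$, $y=xB$ is $\{y=x(A+t(B-A)) : t\in\mathbf K\}\cup\{x=0\}$. So (R) says exactly that the spread set is closed under $\mathbf K$-affine combinations, i.e.\ is, after a shift, a $\mathbf K$-linear semifield spread set. A proper semifield of order $q^3$, $q>2$, three-dimensional over $\mathrm{GF}(q)$ (e.g.\ an Albert twisted field), gives such a spread of $\PG(5,q)$ with a non-Desarguesian plane, hence not regular. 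With $m=n$ this is even a generalized affine space satisfying (T1) in which $\mathcal D$ is a single non-regular block. So regularity can only come from $m/n\ge 2$. Your ``Dimension'' bullet does not give that: every $S_{ij}$ lies in the one block $\langle\alpha_i,\alpha_j\rangle$.

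\emph{Step 2 and the translation plan do not close the gap.} ``It sits inside the ambient Desarguesian space'' proves nothing about Desargues in a plane. Translations cannot settle it either. By Theorem~\ref{thm5.1} and Remark~\ref{rem5.6}, under $x\mapsto\alpha$ the blocks of $\mathcal D$ are exactly the $n$-flats of $\AG_k(m,\mathbf K)$ whose space at infinity lies in $\Pi$. So $\mathcal D$ is the Andr\'e/Bruck--Bose structure of the spread $\Pi$ of $\xi$. For $m=2n$ that structure is a translation plane for \emph{every} spread $\Pi$, and its kernel is an $n$-dimensional extension of $\mathbf K$ only if $\Pi$ is already Desarguesian, which is what has to be shown.

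\emph{What the paper uses instead.} The needed information comes from how $S$ sits in $\PG(m+n-1,\mathbf K)$, and the paper takes it from Segre. The point set $S\cup\Pi$ of $\mathcal D'$ is a ``sistema grafico'': any two elements span a $(2n-1)$-space partitioned by elements of $S\cup\Pi$. Segre's theory of sistemi grafici then gives $\mathcal D'\cong\PG(m/n,\mathbf L)$ with $\mathbf L$ an $n$-th skewfield extension of $\mathbf K$. That theorem, or a genuine proof of Desargues for $m/n=2$ from the embedding, is the missing ingredient.

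\textbf{The affine-space axioms are also incomplete.} Step~1 yields only a linear space with a Playfair parallelism, which is not yet an affine space. You also need that three non-collinear points generate an affine plane; your ``modular lattice'' sentence assumes this rather than proving it. The paper gets it by a dimension count:
\begin{itemize}
\item everything generated by non-collinear $\alpha_1,\alpha_2,\alpha_3$ lies in the $(3n-1)$-space $\langle\alpha_1,\alpha_2,\alpha_3\rangle$;
\item two distinct blocks there are $(2n-1)$-spaces, so they meet in at least an $(n-1)$-space;
\item since $S\cup\Pi$ partitions $\pi$ and induces a spread on each block, that intersection is exactly one element of $S\cup\Pi$, so the blocks share a point of $\mathcal D$ or are parallel.
\end{itemize}
With the parallel axiom this gives affine subplanes, hence an affine space. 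A maximal chain of subspaces then gives $m+n-1=(d+1)n-1$, i.e.\ $d=m/n$. Your lattice route could replace this, but only after you verify that $S\cup\Pi$ is a geometric spread, including for pairs of elements of $\Pi$, and that verification is absent.
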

{\em Proof}
First of all any two distinct elements of $S$      are contained in exactly one element of $B$.
\par Let $\beta \in B$ and let $\bar {\pi} \in \Pi$ be the space at infinity of $\beta$. If $\alpha \in S$, then $\langle \bar {\pi}, \alpha \rangle = \beta^\prime$ is the unique element of B containing $\alpha$ and paralllel to $\beta$.
\par For $\mathcal D$ to be an affine space it is sufficient that any three non-collinear points of $\mathcal D$ (that are three elements $\alpha_1, \alpha_2, \alpha_3$ of $S$ not belonging to a common element of $B$) generate an affine subplane of the geometry $\mathcal D$, see page 24 of  \cite {U:11}. Remark that the geometry $\mathcal A$ generated by $\alpha_1, \alpha_2, \alpha_3$ is contained in the $(3n - 1)$-dimensional subspace $\langle \alpha_1, \alpha_2, \alpha_3 \rangle$ of $\PG(m + n -1, \bf K)$. Also, distinct lines $\beta, \beta^\prime$ of $B$ in $\mathcal A$ are $(2n - 1)$-subspaces in the $(3n - 1)$-subspace $\langle \alpha_1, \alpha_2, \alpha_3 \rangle$, hence have an $(n - 1)$-subspace in common. As $S \cup \Pi$ induces an $(n - 1)$-spread in each element of $B$, the lines $\beta, \beta^\prime$ are either parallel or have an element of $S$ in common. Remark also that for any line $\beta$ in $\mathcal A$ and any point $\alpha \in \mathcal A$ there is exactly one line in $\mathcal A$ containing $\alpha$ and parallel to $\beta$.
\par Now it is clear that $\mathcal A$ is an affine subplane of $\mathcal D$ and hence $\mathcal D$ is an affine space.
\par By considering a maximal chain of subspaces of $\mathcal D$, it follows that the dimension d of $\mathcal D$ is determined by the equation $m + n - 1 = (d + 1)n - 1$. Hence $d = m/n$ and so $n$ divides $m$.
\par Let $\mathcal D^\prime$ be the projective completion of $\mathcal D$. The point set of $\mathcal D^\prime$ is the $(n - 1)$-spread $S \cup \Pi$ of $\PG(m + n - 1, \bf K)$. In the terminology of Segre, see IX of (\cite {S:00}, \cite {S:64}), the spread $S \cup \Pi = P$ is a "{\bf sistema grafico}" and so $\mathcal D^\prime$ is a $\PG(m/n, \bf L)$ over some $n$-th skewfield extension of $\bf K$. \eop \\
\bigskip

\begin{corollary}
\label{cor5.3}
\begin{itemize} 
\item [(i)] The set $\Pi$ is a $\PG(m/n - 1, \bf L)$ in $\mathcal D^\prime$. Let ${\overline{\PG}}(m/n - 1, \bf L)$ be any hyperplane in $\mathcal D^\prime$ and let $\overline \Pi$ be the set of all elements of $S \cup \Pi$ in ${\overline{\PG}}(m/n - 1, \bf L)$. Then ${\overline S} = (S \cup \Pi) \setminus {\overline \Pi}$ is a generalized affine space satsifying (T1), see page 1083 of \cite {S:00} or page 47 of \cite {S:64}. For condition (R) see page 1083 of \cite {S:00} or page 47 of \cite {S:64}. Condition (T1) follows from the fact that $\Gamma (z, \alpha_i) \cup \lbrace \pi_{z, \alpha_i} \rbrace$, with $z \in \xi$ and $\alpha_i \in S$, is an $(n - 1)$-spread of the $(2n - 1)$-dimensional space $\langle \alpha_i , \pi_{z, \alpha_i} \rangle \in B$ and that the elements of $\Pi$ in the "lines" of $\mathcal D^\prime$ not in $\mathcal D$ also form $(n - 1)$-spreads.
\item [(ii)] Let $\overline {\alpha}_i, \overline {\alpha}_j, \overline {\alpha}_k$ be distinct elements of $S \cup \Pi$ contained in some $(2n - 1)$-space of $\PG(m + n - 1, \bf K)$. Then the regulus defined by $\overline {\alpha}_i, \overline {\alpha}_j, \overline {\alpha}_k$ is contained in $S \cup \Pi$, see page 1083 of \cite{S:00} or page 47 of \cite{S:64}. Notice that $\overline {\alpha}_i, \overline {\alpha}_j, \overline {\alpha}_k$ are three distinct collinear points of $\PG (m/n, \bf L)$.
\end{itemize}
\end{corollary}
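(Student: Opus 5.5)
The plan is to work entirely inside the projective completion $\mathcal D^\prime \cong \PG(m/n, \bf L)$ from Theorem \ref{thm5.2}, with point set the $(n-1)$-spread $P = S \cup \Pi$ of $\PG(m+n-1,\bf K)$. The key input is that $P$ is a Segre \emph{sistema grafico}. This means two things. First, the subspaces of $\mathcal D^\prime$ are exactly the sets of spread elements contained in a subspace of $\PG(m+n-1,\bf K)$ that is partitioned by them. In particular, an $h$-dimensional subspace of $\mathcal D^\prime$ spans a $((h+1)n-1)$-space of $\PG(m+n-1,\bf K)$. Second, the line through three collinear points is a regular spread of a $(2n-1)$-space, so it contains the regulus through any three of its elements.

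Part (ii) comes first, since part (i) uses it. Let $\overline{\alpha}_i, \overline{\alpha}_j, \overline{\alpha}_k$ lie in a $(2n-1)$-space $\sigma$. Then $\overline{\alpha}_i$ and $\overline{\alpha}_j$ span $\sigma$, so $\sigma$ is the $\mathcal D^\prime$-line through them, and $\overline{\alpha}_k$ lies on that line. For lines of $\mathcal D$ (elements of $B$), the regulus through two elements of $S$ and the space at infinity lies in $P$ by condition (R). The general case is Segre's result that each line-spread of a sistema grafico is regular, so it contains the regulus through any three of its elements. Since the paper cites Segre for this, I will invoke it and only check that the hypotheses match.

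For (i), $\Pi$ is the set of points at infinity of the affine space $\mathcal D$, so it is the hyperplane at infinity $\PG(m/n-1,\bf L)$ of $\mathcal D^\prime$. Now take any hyperplane $\overline{\PG}(m/n-1,\bf L)$ of $\mathcal D^\prime$. It corresponds to an $(m-1)$-space $\overline{\xi}$ of $\PG(m+n-1,\bf K)$, namely the span of its points, and the elements of $\overline\Pi$ partition $\overline{\xi}$. Hence $\{\overline{\xi}\} \cup \overline S$ is again an affine spread.

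Condition (R) for $\overline S$ goes as follows. Take two elements of $\overline S$. The $\mathcal D^\prime$-line through them meets the hyperplane in exactly one point $\overline\pi \in \overline\Pi$, and $\overline\pi$ is precisely $\langle\overline\alpha_i,\overline\alpha_j\rangle\cap\overline\xi$. By (ii), the regulus through $\overline\alpha_i$, $\overline\alpha_j$ and $\overline\pi$ lies in $P$. Its members other than $\overline\pi$ lie on that line but not in the hyperplane, so they belong to $\overline S$.

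For (T1), let $z \in \overline\xi$ and $\overline\alpha_i \in \overline S$, and let $\overline\pi$ be the element of $\overline\Pi$ containing $z$. The line $\sigma = \langle\overline\pi,\overline\alpha_i\rangle$ of $\mathcal D^\prime$ is partitioned by spread elements. Since $\langle z,\overline\alpha_i\rangle$ is an $n$-space in the $(2n-1)$-space $\sigma$, it meets every element of the line other than $\overline\pi$. Conversely, any element of $\overline S$ meeting $\langle z,\overline\alpha_i\rangle$ meets $\sigma$, so it lies on this line. Hence $\Gamma(z,\overline\alpha_i)$ is the set of points of $\sigma$ outside the hyperplane. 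That set depends only on $z$ and the line through $\overline\alpha_i$ and $\overline\pi$, so it is the same for every $\overline\alpha_j$ in it, which gives (T1). This also covers lines of $\mathcal D^\prime$ not in $\mathcal D$, whose points at infinity are elements of $\Pi$ outside $\overline\Pi$.

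I expect the main obstacle to be the assertion that every line of $\mathcal D^\prime$, including those lying in $\Pi$, is a regular spread. That requires Segre's theorem on sistemi grafici, which the paper cites rather than proves, so I would rely on it.
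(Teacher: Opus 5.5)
Your proposal is correct and follows essentially the paper's route. Everything rests on Theorem \ref{thm5.2} together with Segre's results on sistemi grafici (regularity of the line spreads), which give (ii) and (R), while (T1) comes from the fact that every line of $\mathcal D^\prime$ is an $(n-1)$-spread of a $(2n-1)$-space. You fill in details the paper only cites or sketches: you deduce (R) for $\overline S$ from (ii), and you identify $\Gamma(z,\overline\alpha_i)$ as the points of the line $\langle\overline\pi,\overline\alpha_i\rangle$ off the chosen hyperplane. One wording slip: regularity of the line spreads is Segre's theorem, not part of the definition of a sistema grafico, as you acknowledge later.
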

\bigskip

\begin{remark}
\label{rem5.4}
In IX of (\cite {S:00}, \cite {S:64}) Segre describes how to construct $S$ from the skewfield extension $\bf L$ of the field $\bf K$. He also shows that from {\bf {any}} $n$-th skewfield extension $\bf L$ of some field $\bf K$ ($\bf K \not= \bf L$ and $\vert \bf K \vert \geq 2$) a generalized affine space $S$ satisfying (T1) can be constructed.
\end{remark}
\bigskip

{\bf Definition}
Let the field $\bf L$ be an $n$-th algebraic extension of the field $\bf K$ ($n \geq 2$ and $\vert \bf K \vert \geq 2$). In the corresponding extension $\PG (m + n - 1, \bf L)$ of $\PG (m + n - 1, \bf K)$, $m = dn$ with $d \geq 2$, we consider $n$ subspaces $\mathcal P_1, \mathcal P_2, \ldots, \mathcal P_n$ of dimension $d$ over $\bf L$ which span $\PG (m + n - 1, \bf L)$ and are conjugate with respect to the extension $\bf L$ of $\bf K$. Now consider all $(n - 1)$-spaces $\alpha$ over $\bf K$ which, extended to $\bf L$, intersect $\mathcal P_1, \mathcal P_2, \ldots, \mathcal P_n$. Let $S^\ast$ be the set of all these spaces $\alpha$. Let $\mathcal L_1$ be a hyperplane of $\mathcal P_1$ and let $\mathcal L_2, \mathcal L_3 \ldots, \mathcal L_n$ be its conjugates in $\mathcal P_2, \mathcal P_3, \ldots, \mathcal P_n$. Let $\Pi$ be the set of all $(n - 1)$-spaces over $\bf K$ which correspond to the points of $\mathcal L_1$ and its conjugates. Then $S^\ast \setminus \Pi = S$ is a generalized affine space (satisfying (T1)).
\par Alternatively, let $V$ be the vector space over $\bf L$ underlying the projective space $\PG(d, {\bf L}), d \geq 2$. If $V$ is considered as a $\bf K$-vector space, each point of $\PG(d, \bf L)$ becomes an $(n - 1)$-dimensional subspace of $\PG(dn + n - 1, \bf K)$. If $\AG(d, \bf L)$ is an affine space arising from $\PG(d, \bf L)$, then it becomes a generalized affine space in $\PG(dn + n - 1, \bf K)$ (satisfying (T1)).
\par Such generalized affine spaces and the corresponding spreads $S \cup \Pi$ are called {\em regular}, "{\em elementari}" in the terminology of Segre, see VII of (\cite {S:00}, \cite {S:64}).
\bigskip 

\begin{theorem}
\label{thm5.6}
For $\bf K$ = GF($q$), $q > 2$, all generalized affine spaces satisfying (T1) (or, equivalently (T2)) are regular.
\end{theorem}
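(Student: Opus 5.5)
The proof will go through Theorem~\ref{thm5.2}. By that theorem, the incidence structure $\mathcal D = (S, B, \in)$ is an affine space $\AG(m/n, \bf L)$ over a skewfield $\bf L$. By Corollary~\ref{cor5.3}, its projective completion $\mathcal D^\prime$, with point set the $(n-1)$-spread $S \cup \Pi$ of $\PG(m+n-1,q)$, is a $\PG(m/n, \bf L)$. Here $\bf L$ is an $n$-th skewfield extension of $\bf K$ = GF($q$). Since $S$ is finite, $\bf L$ is a finite skewfield, so by Wedderburn's theorem $\bf L$ is commutative and hence $\bf L$ = GF($q^n$). The strategy is then to identify the sistema grafico $S \cup \Pi$ with the spread obtained by viewing the $(d+1)$-dimensional GF($q^n$)-vector space ($d = m/n$) as a GF($q$)-space of dimension $(d+1)n$, which is exactly the alternative description of regular spreads in the Definition.

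The key steps, in order, are as follows.

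\begin{itemize}
\item[(a)] Check that $\mathcal D^\prime$ is Desarguesian. This is automatic when $d = m/n \geq 2$ by the Veblen--Young theorem, and also follows from Segre's construction in \cite{S:00}, \cite{S:64}. Hence $\mathcal D^\prime \cong \PG(d, q^n)$.
\item[(b)] Use Corollary~\ref{cor5.3}(ii) (closure of $S \cup \Pi$ under reguli through three elements in a common $(2n-1)$-space), together with the fact that every line of $\mathcal D^\prime$ is a $(2n-1)$-space spread by elements of $S \cup \Pi$. From this we conclude that the spread induced in each such $(2n-1)$-space is a regular spread of $\PG(2n-1,q)$ in the sense of Segre. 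Regular spreads of $\PG(2n-1,q)$ with $q>2$ are classically the Desarguesian ones, arising from GF($q^n$).
\item[(c)] Coordinatize. Fix a frame of $d+2$ elements of $S \cup \Pi$ in general position in $\mathcal D^\prime$. Choose GF($q$)-coordinates so that $d+1$ of these are the coordinate subspaces. The Desarguesian spread on one line, identified via (b) with a field GF($q^n$) acting on $\mathrm{GF}(q)^n$, is then transported along the lines through the base element to all coordinate blocks. This produces an embedding of the GF($q^n$)-structure in $\mathrm{GF}(q)^{(d+1)n}$ under which every element of $S \cup \Pi$ is a GF($q^n$)-point.
\end{itemize}

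Equivalently, we can appeal directly to Segre's result in IX of \cite{S:00}, \cite{S:64} that a sistema grafico over $\bf K$ whose associated skewfield $\bf L$ is commutative and separable over $\bf K$ is ``elementare''. Finite extensions of finite fields are separable and commutative, so this gives the result at once.

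Since $\Pi$ is a hyperplane of $\mathcal D^\prime$ by Corollary~\ref{cor5.3}(i), the set $S = (S\cup\Pi)\setminus \Pi$ is then $\AG(d, q^n)$ realized in $\PG(dn+n-1, q)$, i.e.\ regular.

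The main obstacle is step (c), making the transport of the field structure consistent: the GF($q^n$)-scalar structures defined on different lines must agree on overlaps. I would first try to cite Segre's characterization directly, since Theorem~\ref{thm5.2} already invokes it. Failing that, I would use Desargues' theorem in $\mathcal D^\prime$ to show that the collineation group of $\PG(m+n-1,q)$ stabilizing $S \cup \Pi$ induces the full perspectivity group of $\mathcal D^\prime$, and then read off the linear GF($q^n$)-structure from the kernel of the translation group. By Wedderburn's theorem this kernel is GF($q^n$), and it acts GF($q$)-linearly on $\PG(m+n-1,q)$, which makes the consistency automatic.
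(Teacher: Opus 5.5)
Your ``direct appeal to Segre'' is exactly the paper's proof: Theorem~\ref{thm5.2} makes $S\cup\Pi$ a sistema grafico, and the paper simply cites Segre's result that every finite sistema grafico is regular (p.~1086 of \cite{S:00}, p.~50 of \cite{S:64}). The coordinatization in (c) and the perspectivity-lifting fallback are unnecessary, and the fallback would not stand on its own: showing that the perspectivities of $\mathcal D'$ are induced by collineations of $\PG(m+n-1,q)$ is essentially the regularity you are trying to prove (also, Veblen--Young only covers $d\geq 3$; for $d=2$ Desarguesianity has to come from Theorem~\ref{thm5.2} itself).
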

{\em Proof}
The spread $S \cup \Pi$ of $\PG(m + n - 1, \bf K)$ is a "sistema grafico". By Segre (\cite {S:00}, \cite {S:64}) each finite sistema grafico is regular; see page 1086 of \cite {S:00} or page 50 of \cite {S:64}. \eop \\
\bigskip 

\begin{remark}
\label{rem5.6}
Consider a space $\PG(m + n - 1, \bf K)$, $\vert \bf K \vert > 2$, with an affine spread $\lbrace \xi, \alpha_1, \alpha_2, \ldots \rbrace$ and assume that for $S = \lbrace \alpha_1, \alpha_2, \dots \rbrace$ conditions (R) and (T1) are satisfied. Hence $S$ is a generalized affine space. Then by Section 3 $\Gamma (z, \alpha_i)$, $z \in \xi$, is an affine subspace of $\AG(m, \bf K)$. By \ref {thm5.1} $\Gamma (z, \alpha_i) \cup \lbrace \pi_{z, \alpha_i}  \rbrace$ is an $(n - 1)$-spread of $\langle \alpha_i, \pi_{z, \alpha_i} \rangle$. Also, $\PG_k(m, \bf K) \cap \langle \alpha_i, \pi_{z, \alpha_i} \rangle$ is an $n$-space containing $\pi_{z, \alpha_i}$. Hence the intersections of $\AG_k(m, \bf K)$ with the elements of $\Gamma (z, \alpha_i)$ are the points of an $\AG_k(n, {\bf K}) \subset \AG_k(m, {\bf K})$. Consequently the elements of $\Gamma (z, \alpha_i)$ form an $\AG(n, \bf K)$ in $\AG(m, \bf K)$. We conclude that in condition (T2) and in the proof of \ref {thm3.1} the affine subspaces of $\AG(m, \bf K)$ are $n$-dimensional.
\end{remark}

\section {Generalized ovoids}
Here we apply our results to generalized ovoids of $\PG(4n - 1, q), q \geq 2$ and $n \geq 1$. Most of it can be generalized to the infinite case.
\bigskip

{\bf Definition}
In $\Omega = \PG(4n - 1, q)$, let $O$ be a set of $(n - 1)$-dimensional subspaces $\xi_i, i = 0, 1, \ldots, q^{2n}$, such that
\begin {itemize}
\item [(a)] every three generate a $\PG(3n - 1, q)$;
\item [(b)] for every $i \in \lbrace 0, 1, \ldots, q^{2n} \rbrace$, there is a $(3n - 1)$-dimensional subspace $\tau_i$ that contains $\xi_i$ and is disjoint from $\xi_j$ for $j \not= i$.
The space $\tau_j$ is the {\em tangent space} of $O$ at $\xi_j$; it is uniquely defined by $O$ and $\xi_j$.
\end {itemize}
The set $O$ is a {\em generalized ovoid} or a {\em pseudo-ovoid} or an {\em egg} or an {\em [n - 1]-ovoid} of $\PG(4n - 1, q)$ \cite {TTVM:06}.
 
\bigskip

\begin {remark}
\label {rem6.1}
A $[0]$-ovoid of $\PG(3, q)$ is just an ovoid of $\PG(3, q)$.
\end {remark}
\bigskip

{\bf Definition}
In the extension $\PG(4n - 1, q^n)$ of $\PG(4n - 1, q)$ consider $n$ 3-spaces $\zeta_i, i = 1, 2, \ldots, n$, that are conjugate in the extension GF($q^n$) of GF($q$) and which span $\PG(4n - 1, q^{n})$. This means that they form an orbit of the Galois group corresponding to this extension and span $\PG(4n - 1, q^n)$.
\par In the space $\zeta_1$ take an ovoid $O_1 = \lbrace x^{(1)}_0, x^{(1)}_1, \ldots, x^{(1)}_{q^{2n} }\rbrace$. Next, let $x^{(1)}_i, x^{(2)}_i, \ldots, x^{(n)}_i$, $i = 0, 1, \ldots, q^{2n}$, be conjugate in GF($q^n$) over GF($q$). The points $x^{(1)}_i, x^{(2)}_i, \ldots, x^{(n)}_i$ now define an $(n - 1)$-dimensional space $\xi_i$ over GF($q$) for each $i = 0, 1, \ldots, q^{2n}$. It follows that the set $O = \lbrace \xi_0, \xi_1, \dots, \xi_{q^{2n}} \rbrace$ is a pseudo-ovoid of $\PG(4n - 1, q)$.
\par These are the {\em regular}  or {\em elementary} pseudo-ovoids. If $O_1$ is an elliptic quadric over GF($q^n$), the corresponding pseudo-ovoid is {\em classical} or a {\em pseudo-quadric}.
\par Alternatively, let $V$ be the 4-dimensional vector space underlying the projective space $\PG(3, q^n)$. Considering $V$ as a GF($q$)-vector space, each point of $\PG(3, q^n)$ becomes an $(n - 1)$-dimensional subspace of $\PG(4n - 1, q)$. If $O_1$ is an ovoid of $\PG(3, q^n)$, then $O_1$ becomes a regular pseudo-ovoid of $\PG(4n - 1, q)$.
\bigskip

\begin{remark}
\label{rem6.2}
For $q$ even, every known generalized ovoid is regular \cite {TTVM:06}. For $q$ odd, there are generalized ovoids that are not regular \cite {TTVM:06}. By the theorem of Barlotti \cite {B:55} and Panella \cite {P:55}, for $q$ odd, every regular generalized ovoid is a pseudo-quadric.
\end{remark}
\bigskip

{\bf  Definition}
\begin{itemize}
\item [(i)] The pseudo-ovoid $O$ in $\PG(4n - 1, q)$ is good at its element $\xi_i$ if any $\PG(3n - 1, q)$ containing $\xi_i$ and at least two other elements of $O$ contains exactly $q^n + 1$ elements of $O$.
\item [(ii)] In this case, $\xi$ is a {\em good} element of $O$, and $O$ is also said to be {\em good (at $\xi_i$)},
\end{itemize}
\bigskip

\begin{remark}
\label{rem6.3}
\begin {itemize}
\item [(i)] A regular pseudo-ovoid is good at each of its elements.
\item [(ii)] There is a large amount of literature on generalized ovoids and on good generalized ovoids \cite {TTVM:06}.
\end {itemize}
\end {remark}
\bigskip

\par Consider a generalized ovoid $O = \lbrace \xi_0, \xi_1, \ldots, \xi_{q^{2n}} \rbrace$ in the space $\PG(4n - 1, q), n \not= 1$. Let $\PG(3n - 1, q)$ be a $(3n - 1)$-dimensional subspace of $\PG(4n - 1, q)$ which is skew to $\xi_0$. Now we project from $\xi_0$ onto $\PG(3n - 1, q)$.
\par Let the projection of $\xi_i, i \not= 0$, be $\alpha_i$ and let the projection of $\tau_0 \setminus \xi_0$ be $\xi$. Then $\alpha_i$ is $(n - 1)$-dimensional and $\xi$ is $(2n - 1)$-dimensional. Also, $\PG(4n - 1, q)$ together with the partition $\xi \cup S$, with $S = \lbrace \alpha_1, \alpha_2, \ldots, \alpha_{q^{2n}} \rbrace$ is a space with an affine spread.
\bigskip
\begin{theorem}
\label{thm6.4}
If $q \not= 2$ and $S$ is a generalized affine space satisfying condition (T1), or equivalently (T2), then the generalized ovoid $O$ is good with good element $\xi_0$.
\end{theorem}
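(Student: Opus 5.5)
We have to show that every $\PG(3n-1,q)$ through $\xi_0$ containing at least two further elements of $O$ contains exactly $q^n+1$ elements of $O$. The plan is to translate this into the projected picture, where Section 5 gives full control. Here $m=2n$. The projection from $\xi_0$ onto the chosen $\PG(3n-1,q)$ gives a bijection between the $(3n-1)$-spaces $\Sigma$ through $\xi_0$ and the $(2n-1)$-spaces $\Sigma^\ast$ of $\PG(3n-1,q)$. An element $\xi_i$, $i\neq 0$, lies in $\Sigma$ iff $\alpha_i\subset\Sigma^\ast$.

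Let $\Sigma\supset\xi_0$ contain distinct $\xi_i,\xi_j$, $i,j\neq0$. By (a) we have $\Sigma=\langle\xi_0,\xi_i,\xi_j\rangle$, so $\Sigma^\ast=\langle\alpha_i,\alpha_j\rangle$, which is $(2n-1)$-dimensional. We also need $\Sigma\neq\tau_0$. This holds because $\tau_0$ contains no $\xi_i$ with $i\neq 0$; equivalently $\alpha_i\not\subset\xi$.

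The count is $|\Sigma\cap O|=1+|\{\alpha_u\in S:\alpha_u\subset\langle\alpha_i,\alpha_j\rangle\}|$. So the claim becomes: the $(2n-1)$-space $\langle\alpha_i,\alpha_j\rangle$ contains exactly $q^n$ elements of $S$.

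To prove this, use Theorem \ref{thm5.2} and its proof. Since $S$ is a generalized affine space satisfying (T1) with $q>2$, $\mathcal D=(S,B,\in)$ is an affine plane $\AG(2,\bf L)$ with $|{\bf L}|=q^n$. Any two distinct elements $\alpha_i,\alpha_j$ lie in exactly one $\beta\in B$, where $\beta=\langle\pi_{z,\alpha_i},\alpha_i\rangle$ is $(2n-1)$-dimensional. Since $\beta\supseteq\langle\alpha_i,\alpha_j\rangle$ and the dimensions agree, $\beta=\langle\alpha_i,\alpha_j\rangle$.

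By the proof of Theorem \ref{thm5.1}, $\Gamma(z,\alpha_i)\cup\{\pi_{z,\alpha_i}\}$ is an $(n-1)$-spread of $\beta$. Every element of $S$ inside $\beta$ is therefore in $\Gamma(z,\alpha_i)$, because spread elements partition $\beta$ and elements of $S$ are pairwise disjoint. Counting points gives
\[
|\Gamma(z,\alpha_i)|=\frac{q^{2n}-1}{q^n-1}-1=q^n
\]
(alternatively, Remark \ref{rem5.6}: $\Gamma(z,\alpha_i)$ forms an $\AG(n,q)$, so it has $q^n$ elements). Hence $\Sigma$ contains exactly $q^n+1$ elements of $O$, and $O$ is good at $\xi_0$.

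The main obstacle is the identification $\langle\alpha_i,\alpha_j\rangle=\beta$ together with the exact count of elements of $S$ in it. This requires the spread property from the proof of Theorem \ref{thm5.1}, and care that the elements of $S$ in $\beta$ are exactly the spread elements other than $\pi_{z,\alpha_i}$. The other routine check is the correspondence "$\xi_u\subset\Sigma\iff\alpha_u\subset\Sigma^\ast$", which holds since $\Sigma=\langle\xi_0,\Sigma^\ast\rangle$.
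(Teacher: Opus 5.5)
Your proposal is correct and follows the same route as the paper, whose proof only cites Theorems \ref{thm5.1} and \ref{thm5.2}. You spell out what that citation means: a $(3n-1)$-space through $\xi_0$ containing $\xi_i,\xi_j$ projects onto $\langle\alpha_i,\alpha_j\rangle$, which is the unique line $\beta\in B$ of the affine plane $\mathcal D$ through $\alpha_i,\alpha_j$, and the spread $\Gamma(z,\alpha_i)\cup\{\pi_{z,\alpha_i}\}$ of $\beta$ forces exactly $q^n$ elements of $S$ in $\beta$, hence $q^n+1$ elements of $O$ in that space (the tacit point that no element of $S$ meets $\pi_{z,\alpha_i}\subset\xi$ is immediate from the affine spread).
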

{\em Proof}
This follows from \ref {thm5.1} and \ref {thm5.2}; crucial is $\Pi$ being an $(n -1)$-spread of $\xi$. \eop \\
\bigskip

\begin{corollary}
\label{cor6.5}
Let $q \not=  2$. Assume that, projecting consecutively from $\xi_0, \xi_1, \ldots, \xi_{q^{2n}}$, the resulting set $S$ always is a generalized affine space satisfying (T1), or equivalently (T2). Then the generalized ovoid $O$ is good at each element. Also, for $q$ odd the generalized ovoid $O$ is classical and for $q$ even it is regular.
\end{corollary}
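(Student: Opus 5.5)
The first assertion follows by applying Theorem \ref{thm6.4} to every element of $O$ in turn. For each $i \in \lbrace 0, 1, \ldots, q^{2n} \rbrace$ we project from $\xi_i$ onto a $(3n-1)$-space skew to $\xi_i$. By hypothesis the resulting set $S$ is a generalized affine space satisfying (T1). Theorem \ref{thm6.4}, with $\xi_i$ in the role of $\xi_0$, then says that $\xi_i$ is a good element of $O$. The only point to check is that the construction preceding Theorem \ref{thm6.4} does not depend on the label $0$. This is clear: the image of $\tau_i \setminus \xi_i$ is a $(2n-1)$-space, and the images of the $\xi_j$, $j \neq i$, are $(n-1)$-spaces partitioning the rest. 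So $O$ is good at every element.

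For the classification we want regularity of $O$ itself, not only of each projected $S$. By Theorem \ref{thm5.6} (with $q > 2$) each projected $S$ is regular, but that alone does not reconstruct $O$. Instead I would use a known result from the pseudo-ovoid literature \cite{TTVM:06}: a pseudo-ovoid of $\PG(4n-1,q)$ that is good at every element is regular. This is, for example, the theorem of Thas that an egg good at all its elements is elementary, proved via the translation generalized quadrangle $T(O)$: goodness at every element gives many regular points and subquadrangles of order $(q^n, q^n)$, which force a kernel of size $q^n$, i.e.\ regularity. If one wants to stay closer to this paper, the alternative is to use the affine spaces $\AG(2, q^n)$ from Theorem \ref{thm5.2} at each $\xi_i$. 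One would show that the $\bf L$-structures coincide for different $i$, giving a global $\PG(3, q^n)$ structure in which $O$ is a set of $q^{2n}+1$ points. The first route is the one I would commit to, since it only cites known results.

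Once $O$ is regular, the parity split follows from Remark \ref{rem6.2}. For $q$ odd (so $q^n$ odd), the underlying ovoid $O_1$ of $\PG(3, q^n)$ is an elliptic quadric by Barlotti--Panella, so $O$ is a pseudo-quadric, i.e.\ classical. For $q$ even the conclusion is just regularity, which we already have.

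The main obstacle is the middle step, passing from ``good at every element'' to ``regular''. If citing \cite{TTVM:06} is acceptable, this is immediate. Otherwise the hard part is the gluing argument: showing that the skewfield $\bf L$ (here $\mathrm{GF}(q^n)$, by Theorem \ref{thm5.6}) and the spreads obtained from different projection centres are compatible, so that they come from a single $\mathrm{GF}(q^n)$-structure on $\PG(4n-1,q)$.
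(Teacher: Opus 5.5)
Your proposal is correct and matches the paper's proof: goodness at every element comes from applying Theorem \ref{thm6.4} with each $\xi_i$ in turn as projection centre, and the classification is then quoted from the literature. The paper cites 5.1.12 of \cite{TTVM:06} for both the $q$ odd and the $q$ even conclusion, whereas you route the $q$ odd case through regularity plus Barlotti--Panella (Remark \ref{rem6.2}); this is an inessential difference.
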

{\em Proof}
This follows from \ref {thm6.4} and 5.1.12 of \cite {TTVM:06}. \eop \\
\bigskip

\begin{remark}
\label{rem6.6}
\begin{itemize}
\item [(i)] For a strenghtening of \ref {cor6.5}, see 5.1.12 of \cite {TTVM:06}.
\item [(ii)] Assume that for $S$ condition (T1) is satisfied. For $O$ this means the following.
\par Let $\gamma$ be an $n$-dimensional subspace of $\tau_0$ containing $\xi_0$. Consider the space $\langle \gamma, \xi_i  \rangle = \delta_i, \xi_i \in O \setminus \lbrace \xi_o \rbrace$. Then $\delta_i$ intersects exactly $q^n + 1$ elements $\xi_0, \xi_i, \xi_{j_1}, \ldots, \xi_{j_{q^n - 1}}$ of $O$. Notice that $\langle \gamma, \xi_i \rangle \cap \xi_{j_k}, k = 1, 2, \ldots, q^n - 1$, is a point. Let $\lbrace \xi_0, \xi_i, \xi_{j_1}, \ldots, \xi_{j_{q^n - 1}} \rbrace = O^\ast$. If (T1) is satisfied, then $\langle \gamma, \xi_{j_k} \rangle = \delta_{j_k} , k = 1, 2, \ldots, q^n -1$, also intersects all elements of $O^\ast$. Hence $O^\ast$ is uniquely defied by $\gamma$ and any of the elements of $O^\ast \setminus \lbrace \xi_0 \rbrace$.
\par Also the converse holds.
\end{itemize}
\end{remark}
\bigskip

\par Next, a new condition (E) will be introduced.
\bigskip

{\bf Definition}
Let $O = \lbrace \xi_0, \xi_1, \ldots, \xi_{q^{2n}} \rbrace$ be a generalized ovoid in $\PG(4n - 1, q), q \geq 2$. Let $\xi_i, \xi_j, \xi_k$ be three distinct elements of $O$, let $\langle \xi_i, \xi_j, \xi_k \rangle$ be $\Delta$, and let $\rho$ be the $(n - 1)$-dimensional space $\tau_i \cap \tau_j \cap \tau_k$. We say that $O$ satisfies {\em condition (E)} with respect to $\lbrace \xi_i, \xi_j, \xi_k \rbrace$ if $\rho$ is contained in $\Delta$.
\bigskip

\begin{remark}
\label{rem6.7}
If the generalized ovoid $O$ is regular and $q$ is even, then $O$ satisfies (E) for each triple $\lbrace \xi_i, \xi_j, \xi_k \rbrace \subset O$ (\cite {H:85}, \cite {TTVM:06}).
\end{remark}
\bigskip

\begin{theorem}
\label{thm6.8}
Assume that $O \subset \PG(4n - 1, q)$ satisfies (E) for each triple $\lbrace \xi_i, \xi_j, \xi_k \rbrace \subset O$, where $\xi_i$ is fixed. If we project from $\xi_i$ onto a $\PG(3n - 1, q) \subset \PG(4n - 1, q)$ skew to $\xi_i$, then the resulting set $S$ satisfies condition (T1).
\end{theorem}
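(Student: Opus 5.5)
We take $\xi_i=\xi_0$ and project from $\xi_0$ as in the construction before \ref{thm6.4}. The plan is to rewrite membership in the sets $\Gamma(z,\alpha_j)$ as a condition on the tangent spaces $\tau_l$, and to use (E) to make that condition depend only on one point determined by $z$.

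\emph{Setting up the dictionary.} A point $z\in\xi$ is the projection of points of $\tau_0\setminus\xi_0$. Let $\gamma$ be the $n$-space $\langle \xi_0,z\rangle\cap\tau_0$, which is the preimage of $z$. For every $j\neq 0$, $\gamma\cap\tau_j$ is exactly one point $w_j$. The dimension count in $\tau_0$ gives $\dim(\gamma\cap\tau_j)\ge 0$, since $\tau_0\cap\tau_j$ is $(2n-1)$-dimensional: it contains no element of $O$ and is disjoint from $\xi_0$. If $\gamma\cap\tau_j$ contained a line, that line would meet the hyperplane $\xi_0$ of $\gamma$. This is impossible because $\tau_j\cap\xi_0=\emptyset$. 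Next, $\alpha_k\in\Gamma(z,\alpha_j)$, $k\neq j$, holds iff some line on $z$ meets $\alpha_j$ and $\alpha_k$. This holds iff $z\in\pi_{jk}=\langle\alpha_j,\alpha_k\rangle\cap\xi$. Lifting back, it holds iff $\gamma$ meets $\Delta\cap\tau_0$, where $\Delta=\langle\xi_0,\xi_j,\xi_k\rangle$.

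\emph{Using (E).} The space $\Delta\cap\tau_0$ is $(2n-1)$-dimensional and contains $\xi_0$. By (E) applied to $\{\xi_0,\xi_j,\xi_k\}$, it also contains $\rho=\tau_0\cap\tau_j\cap\tau_k$, which is $(n-1)$-dimensional and disjoint from $\xi_0$. Hence $\Delta\cap\tau_0=\langle\xi_0,\rho\rangle$. Therefore $\gamma$ meets $\Delta\cap\tau_0$ outside $\xi_0$ iff the projection of $\gamma$ from $\xi_0$ (namely $z$) lies in the projection of $\rho$. Since $\gamma\cap(\tau_0\cap\tau_j)=\{w_j\}$ and $\rho\subset\tau_0\cap\tau_j$, which projects injectively into $\xi$, this happens iff $w_j\in\rho$. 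Because $w_j\in\tau_0\cap\tau_j$ already, this is equivalent to $w_j\in\tau_k$. So we obtain
\[
\Gamma(z,\alpha_j)=\{\alpha_l\in S:\ w_j\in\tau_l\}.
\]

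\emph{Conclusion.} Let $\alpha_k\in\Gamma(z,\alpha_j)$. Then $w_j\in\gamma\cap\tau_k$, so by uniqueness $w_k=w_j$. The displayed description then gives $\Gamma(z,\alpha_k)=\{\alpha_l: w_k\in\tau_l\}=\Gamma(z,\alpha_j)$, which is (T1).

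The main obstacle is the middle step: the equivalence of $z\in\pi_{jk}$ with $w_j\in\tau_k$. It needs the careful identification $\Delta\cap\tau_0=\langle\xi_0,\rho\rangle$, which is exactly where (E) is used. It also needs the injectivity of the projection restricted to $\tau_0\cap\tau_j$, which uses that this space is disjoint from $\xi_0$.
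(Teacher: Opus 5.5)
Your proof is correct. It rests on the same observation as the paper's: $\Gamma(z,\alpha_j)$ consists of the elements whose tangent spaces pass through the point $x=\gamma\cap\tau_j$ (your $w_j$), and (E) is what forces $\langle\xi_0,\xi_j,x\rangle$ to meet $\xi_u$ whenever $x\in\tau_u$.

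The two routes differ in how the reverse inclusion is obtained. The paper only shows that $\langle\xi_0,\xi_j,x\rangle$ meets every element of the set $U$ of elements whose tangent space contains $x$. It then closes by counting: it cites from \cite{TTVM:06} that such a point lies on exactly $q^n+1$ tangent spaces, and matches this against the $q^n+1$ elements met by $\langle\gamma,\xi_j\rangle$ in Remark \ref{rem6.6}.

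You instead use (E) to obtain $\Delta\cap\tau_0=\langle\xi_0,\rho\rangle$. This gives both inclusions at once, hence the exact description $\Gamma(z,\alpha_j)=\{\alpha_l : w_j\in\tau_l\}$, and (T1) follows immediately from $w_k=w_j$. Your version is self-contained and needs neither the external counting result nor finiteness, so it also covers the infinite setting the paper alludes to. It also makes explicit how $x$ is determined by $z$ and $\alpha_j$, which the paper leaves implicit. The paper's version is shorter, given the cited facts.

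Two small points to tidy:
\begin{itemize}
\item $\gamma$ always meets $\Delta\cap\tau_0$ in $\xi_0$. The lifted condition should therefore read $\gamma\subseteq\Delta$ (equivalently, $\gamma$ meets $\Delta\cap\tau_0$ outside $\xi_0$), which is what you actually use in the next step.
\item The claim $\dim(\Delta\cap\tau_0)=2n-1$ deserves a line of justification. Since both $\Delta$ and $\tau_0$ contain $\xi_0$, one has $\Delta\cap\tau_0=\langle\xi_0,\pi_{jk}\rangle$, and $\pi_{jk}$ is $(n-1)$-dimensional, as noted in Section 2.
\end{itemize}
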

{\em Proof}
Let $x$ be a point of $\rho = \tau_i \cap \tau_j \cap \tau_k$. The point $x$ is contained in $q^n + 1$ tangent spaces $\tau_i, \tau_j, \tau_k, \ldots$ of $O$ \cite {TTVM:06}. Let $W$ be the set of these tangent spaces and let $U$ be the set of the elements $\lbrace \xi_i, \xi_j, \xi_k, \ldots \rbrace$ of $O$ contained in these spaces. Then for $\xi_u \in U \setminus \lbrace \xi_i, \xi_j \rbrace$ the space $\langle \xi_i, \xi_j, x \rangle$ is $2n$-dimensional and the space $\langle \xi_i, \xi_j, \xi_u \rangle$ containing $x$ is $(3n - 1)$-dimensional, so $\langle \xi_i, \xi_j, x \rangle$ contains  a point $y$ of $\xi_u$. Hence  $\langle \xi_i, \xi_j, x \rangle$ intersects the $q^n + 1$ spaces of $U$. Interchanging roles of $\xi_j$ and $\xi_u$, we see that $\langle \xi_i, \xi_u, x \rangle$ intersects all spaces of $U$.
\par From \ref {rem6.6} it follows that condition (T1) is satisfied for $S$. \eop \\
\bigskip

\begin{corollary}
\label{cor6.9}
\begin{itemize}
\item [(i)] Let $q \not= 2$. If $O$ satisfies (E) for each triple $\lbrace \xi_i, \xi_j, \xi_k \rbrace \subset O$, with $\xi_i$ fixed, and if $S$ satisfies (R), then $O$ is good at $\xi_i$. As $O$ is good at $\xi_i$ and satisfies (E) it follows from \cite {TTVM:06} that $q$ is even.
\item [(ii)] Let $q \not= 2$. If for a given $\xi_i \in O$ $S$ is a generalized affine space which satisfies (T1) then $O$ is good at $\xi_i$, so by \cite {TTVM:06} $O$ satisfies (E) for each triple $\lbrace \xi_i, \xi_j, \xi_k \rbrace \subset O$ if and only if $q$ is even.
\item [(iii)] Let $q \not= 2$. If $O$ satisfies (E) for each triple $\lbrace \xi_i, \xi_j, \xi_k \rbrace \subset O$ and if for each $\xi_i \in O$ the set $S$ satisfies (R), then $O$ is good at each element of $O$. Hence $q$ is even and by \ref {cor6.5} the generalized ovoid $O$ is regular.
\end{itemize}
\end{corollary}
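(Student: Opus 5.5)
Each part of Corollary \ref{cor6.9} is a chain of earlier results together with cited facts from \cite{TTVM:06}, so the plan is to verify that the hypotheses of each cited result are satisfied. Throughout, $q \not= 2$. Hence $\vert {\bf K} \vert > 2$, which is what Theorems \ref{thm3.1}, \ref{thm5.1}, \ref{thm5.2} and \ref{thm6.4} require.

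For (i), fix $\xi_i$ and project from $\xi_i$ onto a $\PG(3n-1,q)$ skew to $\xi_i$. By hypothesis (E) holds for every triple containing $\xi_i$, so Theorem \ref{thm6.8} applies and the projected set $S$ satisfies (T1). Since $S$ also satisfies (R), it is a generalized affine space satisfying (T1), and by Theorem \ref{thm3.1} it satisfies (T2) as well. Theorem \ref{thm6.4} then shows that $O$ is good at $\xi_i$. For the parity statement I will invoke the result of \cite{TTVM:06} that if a pseudo-ovoid is good at $\xi_i$ and satisfies (E) for all triples through $\xi_i$, then $q$ is even. The main point needing care is to check that the cited result is stated exactly for triples containing the good element, and not for all triples.

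For (ii), the hypothesis is directly that $S$ is a generalized affine space with (T1). Theorem \ref{thm6.4} gives goodness at $\xi_i$. For the equivalence, the \emph{if} direction uses the theorem of \cite{TTVM:06} that a pseudo-ovoid that is good at $\xi_i$ with $q$ even satisfies (E) for all triples through $\xi_i$. The \emph{only if} direction is exactly the parity fact used in (i). I expect this step to be the main obstacle: it rests entirely on the precise form of the \cite{TTVM:06} statements about good eggs and property (E). If those are only available for regular eggs, I would instead try to derive (E) at a good element from the regularity of the associated $\PG(3n-1,q)$'s through $\xi_i$, since these spaces meet $O$ in regular $[n-1]$-ovals. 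I would then use the known fact that, for $q$ even, the tangent spaces of a regular pseudo-oval are concurrent in a nucleus space lying in their span, as in Remark \ref{rem6.7}.

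For (iii), apply (i) to every $\xi_i \in O$. This gives that $O$ is good at each element and that $q$ is even. In addition, for every $\xi_i$ the projected set $S$ satisfies (R) and, by Theorem \ref{thm6.8}, also (T1). So the hypothesis of Corollary \ref{cor6.5} holds: projecting from each element gives a generalized affine space satisfying (T1). Corollary \ref{cor6.5} then yields that $O$ is regular, since $q$ is even.
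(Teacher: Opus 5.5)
Your proposal is correct and follows the same chain the paper intends; the corollary has no separate proof, and its reasoning is embedded in the statement. Theorem~\ref{thm6.8} turns (E) at $\xi_i$ into (T1); together with (R), Theorem~\ref{thm6.4} gives goodness at $\xi_i$; the parity claims are quoted from \cite{TTVM:06}; and (iii) is (i) applied at every element combined with Corollary~\ref{cor6.5}. Your fallback argument, via the nucleus of the regular pseudo-ovals in the $(3n-1)$-spaces through $\xi_i$, is a sensible unpacking of what is being cited, but nothing beyond the citation is needed.
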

\bigskip

\section {Generalized quadrangles}
Here our results will be applied to finite translation generalized quadrangles (TGQ) of order $(s, s^2), s \not= 1$. Again much of it can be generalized to the infinite case.
\bigskip

{\bf Definitions}
A (finite) {\em generalized quadrangle (GQ)} is an incidence structure $\mathcal {S} = (P, B, \bf I)$ in which $P$ and $B$ are disjoint (nonempty) sets of objects called {\em points} and {\em lines}  (respectively) and for which {\bf I} is a symmetric point-line incidence relation satisfying the following axioms:
\begin {itemize}
\item [(i)] Each point is incident with $1 + t$ lines $(t \geq 1)$ and two  distinct points are incident with at most one line.
\item [(ii)] Each line  is incident with $1 + s$ points $(s \geq 1 )$ and two distinct lines are incident with at most one point.
\item [(iii)] If $x$ is a point and $L$ is a line not incident with $x$, then there is a unique pair $(y, M) \in P \times B$ for which $x \mathbf {I} M \mathbf {I} y \mathbf {I} L$.
\end {itemize}
\par The integers $s$ and $t$ are the {\em parameters} of the GQ and $\mathcal S$ is said to have {\em order} $(s, t)$; if $s = t$, then $\mathcal S$ is said to have {\em order} $s$.
\bigskip

\par For terminology, notations, results, etc., concerning finite generalized quadrangles and not explicitly given here, see the monograph of Payne and Thas \cite {P&JT:09}.
\par Let $O = \lbrace \xi_0, \xi_1, \ldots, \xi_{q^{2n}} \rbrace, n \geq 1$, be a generalized ovoid in $\PG(4n - 1, q)$. Let $\tau_i$ be the tangent space of $O$ at $\xi_i, i = 0, 1, \ldots, q^{2n}$.
\par Now embed $\PG(4n - 1, q)$ in a $\PG(4n, q)$ and construct a point-line geometry $T(n, 2n, q) = T(O)$ as follows.
\par Points are of three types:
\begin{itemize}
\item [(i)] The points of $\PG(4n, q) \setminus \PG(4n - 1, q)$.
\item [(ii)] The $3n$-dimensional subspaces of $\PG(4n, q)$ which intersect $\PG(4n - 1, q)$ in one of the spaces $\tau_i$.
\item [(iii)] The symbol $(\infty)$.
\end{itemize}
\par Lines are of two types:
\begin{itemize}
\item [(a)] The $n$-dimensional subspaces of $\PG(4n, q)$ which intersect $\PG(4n - 1, q)$ in one of the spaces $\xi_i$.
\item [(b)] The elements of $O$.
\end{itemize}
\par Incidence in $T(n, 2n, q)$ is defined as follows. A point of type (i) is incident only with lines of type (a); here the incidence is that of $\PG(4n, q)$. A point of type (ii) is incident with all lines of type (a) contained in it and with the unique element of $O$ contained in it. The point $(\infty)$ is incident with no line of type (a) and with all lines of type (b).
\bigskip

\begin{theorem}
\label{thm7.1}
The point-line geometry $T(n, 2n, q) = T(O)$ is a GQ of order $(q^n, q^{2n})$.
\end{theorem}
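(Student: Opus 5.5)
The plan is to verify the three GQ axioms directly, following the standard argument for $T(O)$ (Payne--Thas, 8.7). First I would count incidences. A line of type (a) is an $n$-space $\lambda$ with $\lambda\cap\PG(4n-1,q)=\xi_i$. It contains $q^n$ affine points (type (i)). It lies in exactly one type (ii) point, namely $\langle \lambda,\tau_i\rangle$, which is $3n$-dimensional because $\lambda\cap\tau_i=\xi_i$. So it carries $q^n+1$ points. A line $\xi_i$ of type (b) is incident with $(\infty)$ and with the $q^n$ $3n$-spaces through $\tau_i$ not contained in $\PG(4n-1,q)$, again $q^n+1$ points.

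Dually, an affine point $x$ lies on $q^{2n}+1$ lines $\langle x,\xi_i\rangle$. A type (ii) point $\theta\supset\tau_i$ is incident with $\xi_i$ and with the $n$-spaces through $\xi_i$ in $\theta$ not inside $\tau_i$; there are $q^{2n}$ of these. The point $(\infty)$ lies on the $q^{2n}+1$ elements of $O$. For two distinct points to be on at most one line, only one nontrivial case needs checking. Two type (ii) points $\theta\supset\tau_i$ and $\theta'\supset\tau_j$ with $i\ne j$ cannot share a type (a) line: such a line would meet $\PG(4n-1,q)$ in a single $\xi_k$, and $\xi_k\subset\tau_i\cap\tau_j$ is impossible by (b). The remaining cases are immediate from projective geometry.

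The main work is axiom (iii). I would take a point $p$ and a line $L$ not incident with it and split into cases by their types. The two easy cases are $p=(\infty)$, and $p$ of type (ii) with $L$ of type (b). In the key case, $p$ is affine and $L$ is of type (a) with $L\cap\PG(4n-1,q)=\xi_i$. If $p\in\langle L,\tau_i\rangle$, the unique answer is the line $\langle p,\xi_i\rangle$, which meets $L$ in the type (ii) point $\langle L,\tau_i\rangle$. Otherwise I need a unique $j\ne i$ and a point $y\in L$ with $\langle p,\xi_j\rangle\ni y$, that is, $\langle p,L\rangle\cap \PG(4n-1,q)$ meets $\xi_j$. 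This intersection is a $2n$-space $\sigma\supset\xi_i$ with $\sigma\not\subset\tau_i$. I must show that $\sigma$ meets exactly one $\xi_j$, $j\ne i$. Uniqueness comes from (a): meeting two such spaces would put three elements of $O$ in a space of dimension less than $3n-1$. For existence I would count. The $2n$-spaces through $\xi_i$ correspond to $n$-spaces in the quotient $\PG(3n-1,q)$, and each $\xi_j$, $j\ne i$, projects to an $(n-1)$-space there. These $q^{2n}$ projections, together with the projection of $\tau_i$, partition the quotient; this is exactly the affine spread structure of Section 6. An $n$-space not contained in the image of $\tau_i$ meets that image in an $(n-1)$-space, so it has $q^n$ further points, each in some projected $\xi_j$. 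It then meets exactly one $\xi_j$ in a single point, since by (a) two elements of $O$ project disjointly and the count forces exactly one.

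The remaining mixed cases also reduce to (a) and (b). One is $p$ of type (ii), $\theta\supset\tau_j$, with $L$ of type (a) through $\xi_i$. The other is $p$ affine with $L=\xi_i$, where the answer is the line $\langle p,\xi_i\rangle$ and its type (ii) point. In each I would locate the unique $(y,M)$ by intersecting the relevant subspaces with $\PG(4n-1,q)$ and applying the tangent-space disjointness. I expect the affine/type (a) case above to be the main obstacle, specifically the partition argument in the quotient.
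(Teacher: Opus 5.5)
The paper states this theorem without proof; it is the standard fact about $T(O)$ from Payne--Thas, \S 8.7. A direct check of the axioms is therefore the natural route. Your parameter counts, the ``at most one line'' check, and the easy and mixed cases of axiom (iii) are fine.

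The gap is in the case you yourself single out as the main one. Since $L$ is an $n$-space and $p\notin L$, the span $\langle p,L\rangle$ is an $(n+1)$-space. So $\sigma=\langle p,L\rangle\cap\PG(4n-1,q)$ is an $n$-space through $\xi_i$, not a $2n$-space. The quotient argument built on the wrong dimension fails in two places:
\begin{itemize}
\item In $\PG(4n-1,q)/\xi_i\cong\PG(3n-1,q)$ the image of $\tau_i$ is a $(2n-1)$-space of codimension $n$. An $n$-space in general meets it only in a point; your claim that it meets it in an $(n-1)$-space holds only for $n=1$.
\item Suppose an $n$-space did have $q^n$ points off that image, each in some projected $\xi_j$. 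Since one projected $\xi_j$ has only $(q^n-1)/(q-1)<q^n$ points, those points would be spread over several projected $\xi_j$. That is the opposite of ``exactly one $\xi_j$ in a single point''.
\end{itemize}
Your uniqueness argument via (a) has the same problem: it works only when $\sigma=\langle\xi_i,w\rangle$ for a single point $w$, that is, for the $n$-dimensional $\sigma$.

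With the correct dimension the step is short.
\begin{itemize}
\item $\sigma$ is a single point $\bar\sigma$ of the quotient, and $\sigma\not\subset\tau_i$ means $\bar\sigma\notin\bar\tau_i$.
\item The images $\bar\xi_j=\langle\xi_i,\xi_j\rangle/\xi_i$, $j\neq i$, are pairwise disjoint by (a): $\dim\langle\xi_i,\xi_j,\xi_k\rangle=3n-1$ forces $\langle\xi_i,\xi_j\rangle\cap\langle\xi_i,\xi_k\rangle=\xi_i$.
\item They miss $\bar\tau_i$ by (b): $\langle\tau_i,\xi_j\rangle=\PG(4n-1,q)$ forces $\tau_i\cap\langle\xi_i,\xi_j\rangle=\xi_i$.
\item The count $q^{2n}\frac{q^n-1}{q-1}+\frac{q^{2n}-1}{q-1}=\frac{q^{3n}-1}{q-1}$ shows they cover everything off $\bar\tau_i$.
\end{itemize}
Hence $\sigma\subset\langle\xi_i,\xi_j\rangle$ for exactly one $j\neq i$. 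Then $\sigma$ meets $\xi_j$ in exactly one point, because its hyperplane $\xi_i$ is disjoint from $\xi_j$, and it meets no other $\xi_k$. The map $y\mapsto py\cap\PG(4n-1,q)$ is a bijection from the affine points of $L$ onto $\sigma\setminus\xi_i$, so this gives exactly the unique pair $(y,M)$.

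In the subcase $p\in\langle L,\tau_i\rangle$ you should also say why no $\langle p,\xi_j\rangle$ with $j\neq i$ qualifies. There $\sigma\subset\tau_i$, and by (b) $\tau_i$ misses every $\xi_j$ with $j\neq i$.
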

\bigskip

\par An important class of GQ is the class of {\em translation generalized quadrangles}; see Payne and Thas \cite {P&JT:09} and Thas, Thas and Van Maldeghem \cite {TTVM:06}.
\bigskip

\begin{theorem} [8.7.1 of Payne and Thas \cite  {P&JT:09}]
\label{thm7.2}
The point-line geometry $T(O)$ is a TGQ of order $(q^n, q^{2n})$. Conversely, every TGQ of order $(s, s^2), s \not= 1$, is equivalent to a $T(O) = T(n, 2n, q)$, with $s = q^n$.
\end{theorem}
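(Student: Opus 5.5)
This is the classical result 8.7.1 of Payne and Thas, quoted in the paper, so I will only outline the standard argument. It has two directions: $T(O)$ is a TGQ, and every TGQ of order $(s,s^2)$ arises this way. Theorem \ref{thm7.1} already gives that $T(O)$ is a GQ of order $(q^n,q^{2n})$, so for the first direction only the translation property at $(\infty)$ remains.

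\textbf{$T(O)$ is a TGQ.} I take the group $E$ of all elations of $\PG(4n,q)$ with axis $\PG(4n-1,q)$, that is, the translations of $\AG(4n,q)=\PG(4n,q)\setminus\PG(4n-1,q)$. This group is elementary abelian of order $q^{4n}$. Each element fixes $\PG(4n-1,q)$ pointwise, so it fixes every element of $O$ and every $\tau_i$. Hence it permutes the points of types (i) and (ii) and the lines of type (a), it preserves incidence, and it fixes $(\infty)$ and each line through $(\infty)$. So $E$ consists of symmetries about $(\infty)$, i.e.\ elations about $(\infty)$. The group $E$ acts regularly on the points of type (i), and these are exactly the points opposite $(\infty)$. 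Since $|E|=s^2t=q^{4n}$, $E$ is the full elation group and it is abelian. Therefore $T(O)$ is a TGQ with base point $(\infty)$.

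\textbf{Every TGQ of order $(s,s^2)$ is some $T(O)$.} Let $\mathcal S$ be a TGQ with base point $(\infty)$ and abelian translation group $G$, with $|G|=s^2t$. For each line $L_i$ through $(\infty)$, $i=0,\dots,t$, let $G_i$ be the stabiliser of the line through a fixed opposite point $x$ that meets $L_i$, and let $G_i^\ast$ be the stabiliser of the corresponding point of that line. These form a 4-gonal family: $|G_i|=s$, $|G_i^\ast|=st$, $G_iG_j\cap G_k=1$, and $G_i^\ast\cap G_j=1$.

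\begin{enumerate}
\item The kernel $\mathbf K$, consisting of the endomorphisms of $G$ that map every $G_i$ into itself, is a field with $G$ a vector space over it. It contains the prime field, so $|\mathbf K|=q$ and $s=q^n$.
\item With $|G|=s^2t=q^{4n}$, $G\cong\mathbf K^{4n}$ for $t=s^2$. I view $G$ as $\AG(4n,q)$ with hyperplane at infinity $\PG(4n-1,q)$.
\item The subgroups $G_i$ and $G_i^\ast$ are $\mathbf K$-subspaces of dimensions $n$ and $3n$. Their spaces at infinity $\xi_i$ and $\tau_i$ have dimensions $n-1$ and $3n-1$.
\item The condition $G_iG_j\cap G_k=1$ gives axiom (a), and $G_i^\ast\cap G_j=1$ gives axiom (b). So $O=\{\xi_i\}$ is a generalized ovoid.
\item Finally I check that the coset geometry of $G$ relative to $\{G_i,G_i^\ast\}$, which is isomorphic to $\mathcal S$, is precisely the construction of $T(O)$.
\end{enumerate}

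\textbf{Main obstacle.} I expect the hardest step to be showing that the kernel is a field of order $q$ with $s=q^n$ and $t=q^{2n}$. The approach is first to show that $G$ is elementary abelian using the 4-gonal family. Then one proves that $\mathbf K$ is a field and that each $G_i^\ast$ is a $\mathbf K$-subspace. This needs the fact that $G_i^\ast$ is characterised as the union of $G_i$-cosets disjoint from the other $G_j$, which is preserved by kernel elements.
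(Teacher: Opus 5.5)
Your outline is the standard Payne--Thas argument behind 8.7.1: translations of $\AG(4n,q)$ for the first part, and Kantor's 4-gonal family, the kernel and the coset geometry for the converse. Since the paper only cites that result, this is essentially the intended route; for the statement as given you may also take $q=p$, so that the only input needed from the kernel theory is that $G$ is elementary abelian, after which every $G_i$ and $G_i^\ast$ is automatically a GF$(p)$-subspace.

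One correction: the translations are elations about $(\infty)$, not symmetries about $(\infty)$. A symmetry about $(\infty)$ fixes every point of $(\infty)^\perp$, whereas a translation whose direction is not in $\tau_i$ moves the $3n$-spaces on $\tau_i$. Also, the claim that $E$ is the ``full elation group'' is unnecessary: you only need $E$ abelian, fixing each element of $O$, and regular on the points of type (i).
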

\bigskip

\par It follows that the theory of the TGQ of order $(s, s^2), s \not=1$, is equivalent to the theory of the generalized ovoids in $\PG(4n - 1, q)$, with $s = q^n$.
\par The generalized ovoid $O$ of $\PG(4n - 1, q)$ is a pseudo-quadric if and only if the TGQ $T(O)$ is classical, that is, if and only if $T(O)$ is isomorphic to the GQ arising from an elliptic quadric in $\PG(5, q^n)$.
\bigskip

{\bf History and motivation of our paper}
Let us first define a regular line in a GQ $\mathcal {S} = (P, B, \bf I)$ of order $(s, t)$. Let $M, N \in B$ and $x \in P$ with $M \mathbf {I} x \mathbf {I}  N$. Then we say that $M$ and $N$ are {\em concurrent} lines. Assume $L$ is a given line in $B$ and let $M$ be any line  not concurrent with $L$. There are $s + 1$ lines $U_1, U_2, \ldots, U_{s + 1}$ concurrent with both $L$ and $M$. If for any $M$ there are $s + 1$ lines $V_1 = L, V_2 = M, V_3, \ldots, V_{s +1}$ concurrent with $U_1, U_2, \ldots, U_{s + 1}$ then we say that $L$ is a {\em regular line} of $\mathcal S$.
\par In \cite {JT:23} the following main result was obtained. 
\par {\em Let $q \not= 2$ and assume that $\mathcal {S} = T(O)$ has a regular line $L$ not incident with the point $(\infty)$; say $L$ contains the element $\xi_0$ of the generalized ovoid $O$. Then $O$ is good at $\xi_0$ and relying on results about good generalized ovoids we finally obtain:
\begin{itemize}
\item [(1)] If $q$ is odd, then $\mathcal S$ is the point-line dual of the translation dual of a semifield flock translation generalized quadrangle.
\item [(2)] If $q$ is even, then $T(O)$ is classical.
\end{itemize}}
\par Now some words on the technique used to prove these results. First we projected $O$ from $\xi_0 \in O$ onto a $\PG(3n - 1, q)$ skew to $\xi_0$ and contained in the $\PG(4n - 1, q)$ generated by $O$. This yielded a set $S$ of $q^{2n}$ $(n - 1)$-spaces $\alpha_1, \alpha_2, \ldots,, \alpha_{q^{2n}}$ which, by the regularity of $L$, was proved to satisfy conditions (R) and (T1) of Sections 2 and 3. From this we showed That $O$ is good at $\xi_0$. So for $q$ odd we have (1) but for $q$ even we had to rely again on the regularity of $L$ to prove that $\mathcal {S} = T(O)$ is classical.
\par We remark that the results of \cite {JT:23} are related to the "Moufang Theorem" of Tits \cite {JT:23} and the theorem of Fong and Seitz \cite {JT:23} on groups with a $BN$-pair of rank 2.
\par From this came the idea to consider sets $S$
\begin{itemize}
\item [(i)] which are independent of GQ,
\item [(ii)] which also satisfy (R) and/or (T1),
\item [(iii)] over any field {\bf K},
\item [(iv)] where the dimension $m - 1$ of the set $\xi = \PG(3n - 1, q) \setminus (\alpha_1 \cup \alpha_2 \cup \ldots \cup \alpha_{q^{2n}})$ is independent of the dimension $n - 1$ of $\alpha_i$.
\end{itemize}
\bigskip

\par Now we mention some direct corollaries of Section 6 and of results on the classification of TGQ of order $(s, s^2), s \not= 1$, arising from good ovoids in $\PG(4n - 1, q)$, with $s = q^n$.
\par Let $\mathcal {S} = T(O)$ be a TGQ arising from a generalized ovoid $O = \lbrace \xi_0, \xi_1, \ldots, \xi_{q^{2n}} \rbrace$ in $\PG(4n - 1, q), q \not= 2$. Project from a given $\xi_i$, say $\xi_0$, onto a $\PG(3n - 1, q) \subset \PG(4n - 1, q) $, let $\alpha_j$, with $j \not= 0$, be the projection of $\xi_j$, and let $\xi$ be the projection of $\tau_0 \setminus \xi_0$ with $\tau_0$ the tangent space of $O$ at $\xi_0$. Let $S = \lbrace \alpha_1, \alpha_2, \ldots, \alpha_{q^{2n}} \rbrace$.
\bigskip

\begin{theorem}
\label{thm7.3}
Assume that $q \not= 2$ and that $S$ is a generalized affine space which satisfies either (T1) or (T2), or for which $O$ satisfies (E) for each triple $\lbrace \xi_0, \xi_i, \xi_j \rbrace$. Then $O$ is good at $\xi_0$. Hence for $q$ odd $\mathcal S$ is the point-line dual of the translation dual of a semifield flock TGQ (and so for $q$ odd $O$ does not satisfy (E) for each  triple $\lbrace \xi_0, \xi_i, \xi_j \rbrace$).
\end{theorem}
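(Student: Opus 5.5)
The plan is to reduce all three hypotheses to the single case handled by \ref{thm6.4}, and then quote the classification of TGQ arising from good generalized ovoids.

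\emph{Step 1: reduce to (T1).} By \ref{thm3.1}, for a generalized affine space with $q>2$, conditions (T1) and (T2) are equivalent, so assuming (T2) is the same as assuming (T1). If instead $O$ satisfies (E) for each triple $\lbrace \xi_0, \xi_i, \xi_j \rbrace$, then \ref{thm6.8} (with $\xi_i=\xi_0$ as the fixed element) shows directly that the projected set $S$ satisfies (T1). In all three cases, therefore, $S$ is a generalized affine space satisfying (T1) with $q \neq 2$.

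\emph{Step 2: goodness at $\xi_0$.} Now \ref{thm6.4} applies and gives that $O$ is good at $\xi_0$. Concretely, \ref{thm5.1} shows that $\Pi$ is an $(n-1)$-spread of $\xi$. Then \ref{thm5.2} shows that every $(2n-1)$-space $\langle \alpha_i, \pi_{z,\alpha_i}\rangle$ meets $S$ in exactly $q^n$ elements, namely a line of $\AG(2,q^n)$. Pulling back through the projection from $\xi_0$, each $(3n-1)$-space on $\xi_0$ and two further elements of $O$ contains exactly $q^n+1$ elements of $O$.

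\emph{Step 3: the case $q$ odd.} Here we invoke the known classification of good generalized ovoids in odd characteristic, as in \cite{TTVM:06} and \cite{JT:23}. For $q$ odd, a TGQ $T(O)$ with $O$ good at $\xi_0$ is the point-line dual of the translation dual of a semifield flock TGQ.

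\emph{Step 4: the parenthetical claim.} By \cite{TTVM:06}, used already in \ref{cor6.9}(i), a generalized ovoid that is good at $\xi_0$ and satisfies (E) for all triples through $\xi_0$ forces $q$ to be even. So for $q$ odd, (E) cannot hold for every triple $\lbrace \xi_0, \xi_i, \xi_j \rbrace$.

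\emph{Main obstacle.} The only real care needed is bookkeeping. First, the hypothesis ``generalized affine space'' (condition (R)) is used in the (E) branch only to make Steps 2 and 3 applicable; \ref{thm6.8} itself does not need it. Second, one must check that the cited classification results apply exactly in the form stated for $q$ odd, $q \neq 2$.
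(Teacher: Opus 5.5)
Your proposal is correct and takes the same route as the paper, whose proof is just ``Follows from Section 6''. You reduce (T2) and (E) to (T1) via \ref{thm3.1} and \ref{thm6.8}, obtain goodness at $\xi_0$ from \ref{thm6.4}, and then cite \cite{TTVM:06} for the classification when $q$ is odd and for the (E)-versus-parity fact, exactly as in \ref{cor6.9}. Your Step 2 correctly unpacks \ref{thm6.4} through the spread $\Gamma(z,\alpha_i)\cup\lbrace\pi_{z,\alpha_i}\rbrace$ of $\langle\alpha_i,\alpha_j\rangle$, a detail the paper leaves implicit.
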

{\em Proof} 
Follows from Section 6. \eop \\
\bigskip

\begin{theorem}
\label{thm7.4}
Let $q \not= 2$ and assume that for each $\xi_i \in O$ the corresponding set $S$ is a generalized affine space which satisfies either (T1) or (T2), or for which (E) is satisfied for each triple $\lbrace \xi_j, \xi_k, \xi_l \rbrace \subset O$. Then $O$ is good at each element. If $q$ is odd, then $O$ is classical and so $T(O)$ is classical. If $q$ is even, then $O$ is regular, and so $T(O) \cong T(O^\prime)$ with $O^\prime$ an ovoid in a space $\PG(3, q^n)$. Condition (E) is satisfied for each triple of $O$ if and only if $O$ is regular with $q$ even.
\end{theorem}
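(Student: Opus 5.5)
The argument reduces everything to goodness of $O$ at each of its elements and then applies the classification results quoted in Section 6 and Theorem \ref{thm7.3}. The goal is to show that under each hypothesis, for every $\xi_i \in O$, the projected set $S$ from $\xi_i$ is a generalized affine space satisfying (T1).

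Under the first hypothesis, for each $\xi_i$ the set $S$ is a generalized affine space with (T1) or (T2). Since $q \not= 2$, Theorem \ref{thm3.1} shows that (T2) gives (T1). Theorem \ref{thm6.4}, applied at $\xi_i$, then says $O$ is good at $\xi_i$. Under the alternative hypothesis, that (E) holds for each triple of $O$, I apply Theorem \ref{thm6.8} with $\xi_i$ fixed to obtain (T1) for $S$. I then use the standing assumption that $S$ is a generalized affine space, i.e. (R), together with Corollary \ref{cor6.9}(i), to conclude that $O$ is good at $\xi_i$. The main care needed is the reading of the hypothesis: the (E)-branch must be taken together with $S$ being a generalized affine space, since Corollary \ref{cor6.9} needs (R). Either way $O$ is good at every element, and Corollary \ref{cor6.5} gives the following: for $q$ odd, $O$ is classical, i.e. a pseudo-quadric; for $q$ even, $O$ is regular.

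The translation to $T(O)$ comes next. For $q$ odd, a pseudo-quadric $O$ gives a classical $T(O)$ by the remark after Theorem \ref{thm7.2}. For $q$ even, regularity means that $O$ arises from an ovoid $O'$ of $\PG(3,q^n)$ by field reduction, as in the alternative definition of regular pseudo-ovoids. Then $T(O)$ is isomorphic to $T(O') = T_3(O')$ over $\mathrm{GF}(q^n)$. The key observation is that field reduction is compatible with the $T(O)$ construction: points of $\PG(4,q^n)\setminus\PG(3,q^n)$, tangent planes, and lines correspond to points of $\PG(4n,q)\setminus\PG(4n-1,q)$, tangent $(3n-1)$-spaces, and so on.

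For the final equivalence, one direction is Remark \ref{rem6.7}: if $O$ is regular and $q$ is even, then (E) holds for every triple. Conversely, if (E) holds for every triple, then by the above $O$ is good everywhere. By Corollary \ref{cor6.9}(iii), or directly from the cited result of \cite{TTVM:06} that goodness together with (E) forces $q$ even, $q$ is even, and then $O$ is regular by Corollary \ref{cor6.5}. The main obstacle is really the $q$ even isomorphism $T(O) \cong T(O')$, which I would justify via the field-reduction correspondence rather than prove from scratch, and the bookkeeping of which hypothesis needs (R).
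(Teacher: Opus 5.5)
Your proposal is correct and follows the same route as the paper, whose proof is just ``Follows from Section 6''. You get goodness at every element from Theorems \ref{thm3.1}, \ref{thm6.4}, \ref{thm6.8} and Corollary \ref{cor6.9}(i), the classical/regular dichotomy from Corollary \ref{cor6.5}, and the final equivalence from Remark \ref{rem6.7} and Corollary \ref{cor6.9}(iii); your observation that the (E)-branch still carries (R) and your field-reduction justification of $T(O)\cong T(O')$ make explicit what the paper leaves implicit.
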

{\em Proof}
Follows from Section 6. \eop \\
\bigskip

\begin{remark}
\label{rem7.5}
If   $T(O) \cong T(O^\prime)$ with $q$ even and $O^\prime$ an ovoid of $\PG(3, q^n)$ which is not an elliptic quadric, then the only regular lines of $T(O)$ are the lines incident with $(\infty)$; see \cite {P&JT:09}.
\end{remark}

\section {Ideas for further research}
{\bf The case $q = 2$}

In most of the results we have to assume that $q \not= 2$, which is very frustrating. The main reason is that in \ref{thm2.2} we have to assume that $\vert {\bf{K}} \vert > 2$. Notice that for $\vert {\bf{K}} \vert = 2$ condition (R) is trivially satisfied and that conditions (T1) and (E) can be formulated without problems.
\par To extend our results to the case $\bf {K}$ = GF(2) we should prove either that the set $\widetilde S$ of size $2^{n -1}(2^n -1)$ in Section 4 on projections is contained in a $\PG(2n - 2, 2)$, or that the set $S^\prime$ is contained in a $\PG(2n - 1, 2)$.
\bigskip

{\bf Condition (R)}

Let us consider a space $\PG(m + n - 1, {\bf K})$ with an affine spread. Assume that the corresponding set $S$ satisfies the regulus condition, that is, satisfies condition (R); see Section 2. Then $S$ is a generalized affine space.

{\em Problem}: Classify these generalized affine spaces.
\par For $\vert {\bf K} \vert > 2$, strong tools are the existence of the projectivity $\rho_{kl}$ and the fact that induction is possible.

{\bf Condition (E) and condition (P)}

Let $O = \lbrace \xi_0, \xi_1, \ldots, \xi_{q^{2n}} \rbrace$ be a generalized ovoid in $\PG(4n - 1, q)$.
\par Does $q$ even imply that $O$ satisfies condition (E) for each triple $\lbrace \xi_i, \xi_j, \xi_k \rbrace \subset O$, and conversely ?
\par If $O$ satisfies condition (E) for each triple $\lbrace \xi_i, \xi_j, \xi_k \rbrace$, with $\xi_i$ fixed, does $q$ be even and $O$ be good at $\xi_i$ ?
\par Consider $\xi_i, \xi_j, \xi_k$ in $O$, with $i, j, k$ distinct. Let $\langle \xi_i, \xi_j, \xi_k \rangle = \theta, \tau_i \cap \tau_j \cap \theta = \eta_k, \tau_i \cap \tau_k \cap \theta = \eta_j$  and $\tau_j \cap \tau_k  \cap \theta = \eta_i$. Then we say that $O$ satisfies {\em condition (P)} at $\lbrace \xi_i, \xi_j, \xi_k \rbrace$ if $\langle \xi_i, \eta_i \rangle, \langle \xi_j, \eta_j \rangle, \langle \xi_k, \eta_k \rangle$ have a $(n - 1)$-space in common.
\par If $O$ satisfies condition (P) for each triple $\lbrace \xi_i, \xi_j, \xi_k \rangle$, does $q$ be odd and $O$ be classical ?
\par If $O$ satisfies condition (P) for each triple $\lbrace \xi_i, \xi_j, \xi_k \rangle \subset O$ with $\xi_i$ fixed, does $O$ be good at $\xi_i$ which would imply that $q$  is odd ?
\par We remark that condition (P) for generalized ovals in $\PG(3n - 1, q)$, $q$ odd, was already considered in \cite {JT:11}.

\end {document}